\documentclass [11pt,reqn]{amsart}

\usepackage{graphicx}
\usepackage{indentfirst}
\usepackage[mathscr]{eucal}
\usepackage[all]{xy}
\usepackage{amsmath,amsxtra,amssymb,latexsym,amscd,amsthm,wasysym, amsfonts,color,esint,mathtools,geometry,cite,hyperref}
\usepackage{fancyhdr}
\usepackage{dsfont}
\usepackage{comment}
\usepackage{enumitem}
\usepackage{inputenc}
\hypersetup{
    unicode=false,        
    pdftoolbar=true,      
    pdfmenubar=true,       
    pdffitwindow=false,     
    pdfstartview={FitH},    
    pdftitle={},    
    pdfauthor={},     
    colorlinks=true,       
   linkcolor=blue,          
    citecolor=blue,        
    filecolor=black,      
    urlcolor=black} 

\title[Smoothness of bounded solutions to complex Hessian equations]{Smoothness of bounded solutions to complex Hessian equations on compact Hermitian manifolds}
\author{Yuetong Fang}

\address{Université d'Angers, CNRS, LAREMA, SFR MATHSTIC, F-49000 Angers, France}
\email{yuetong.fang@univ-angers.fr}
\keywords{Hermitian Geometry, complex Hessian equations, regularity}
\subjclass[2020]{32W20, 32U05, 32Q15, 35A23}

\begin{document}

\begin{abstract}
  We establish an a priori $C^2$-estimate for complex Hessian equations on compact Hermitian manifolds. As an application, we obtain a regularity result for solutions to complex Hessian equation with densities of the right hand side depending on the solution. This generalizes Sz\'ekelyhidi--Tosatti's result to the setting of complex Hessian equations on Hermitian manifolds. 
\end{abstract}

\maketitle

\def\PSH{\operatorname{PSH}}
\def\SH{\operatorname{SH}}
\def\ddbar{\frac{\sqrt{-1}}{2\pi}\partial\bar{\partial}}
\def\ddb{\sqrt{-1}\partial\bar{\partial}}
\def\tmax{T_{\max}}
\def\dd{\partial}
\def\ddc{dd^c}
\def\omt{\omega_0+\frac{\sqrt{-1}}{2\pi}\partial\bar{\partial}\varphi}
\def\p{\varphi}
\def\omdt{\hat{\omega}_t+\frac{\sqrt{-1}}{2\pi}\partial\bar{\partial}\varphi}
\def\tr{\mathrm{tr}}
\def\Capa{\mathrm{Cap}}
\def\MA{\mathrm{MA}}
\def\ric{\mathrm{Ric}}
\def\Vol{\mathrm{Vol}}
\def\osc{\mathrm{Osc}}
\def\dtdelta{\dfrac{\partial}{\partial t}-\Delta} 
\def\pdot{\dot{\varphi}}
\numberwithin{equation}{section}
\def\1{\mathds{1}}
\newcommand{\A}{\mathbb{A}}
\newcommand{\B}{\mathbb{B}}
\newcommand{\C}{\mathbb{C}}
\newcommand{\G}{\mathbb{G}}

\newcommand{\Gm}{\mathbb{G}_\mathrm{m}}

\newcommand{\N}{\mathbb{N}}
\renewcommand{\P}{\mathbb{P}}
 \newcommand{\Q}{\mathbb{Q}}
 \newcommand{\R}{\mathbb{R}}
 \newcommand{\Z}{\mathbb{Z}}
\newcommand{\T}{\mathbb{T}}

\def\PSH{\operatorname{PSH}}
\def\SH{\operatorname{SH}}
\def\ddbar{\frac{\sqrt{-1}}{2\pi}\partial\bar{\partial}}
\def\ddb{\sqrt{-1}\partial\bar{\partial}}
\def\tmax{T_{\max}}
\def\dd{\partial}
\def\ddc{dd^c}
\def\omt{\omega_0+\frac{\sqrt{-1}}{2\pi}\partial\bar{\partial}\varphi}
\def\p{\varphi}
\def\omdt{\hat{\omega}_t+\frac{\sqrt{-1}}{2\pi}\partial\bar{\partial}\varphi}
\def\Tr{\mathrm{Tr}}
\def\Capa{\mathrm{Cap}}
\def\MA{\mathrm{MA}}
\def\ric{\mathrm{Ric}}
\def\Vol{\mathrm{Vol}}
\def\osc{\mathrm{Osc}}
\def\dtdelta{\dfrac{\partial}{\partial t}-\Delta} 
\def\pdot{\dot{\varphi}}
\numberwithin{equation}{section}
\def\1{\mathds{1}}

\newtheorem{theorem}{Theorem}[section]
\newtheorem{thm}[theorem]{Theorem}
\newtheorem{lemma}[theorem]{Lemma}
\newtheorem{lem}[theorem]{Lemma}
\newtheorem{prop}[theorem]{Proposition}
\newtheorem{coro}[theorem]{Corollary}
\newtheorem{corollary}[theorem]{Corollary}
\newtheorem*{Main Theorem}{Main Theorem}

\theoremstyle{definition}
\newtheorem{definition}[theorem]{Definition}
\newtheorem{defi}[theorem]{Definition}
\newtheorem{example}[theorem]{Example}
\newtheorem{exa}[theorem]{Example}
\newtheorem{claim}[theorem]{Claim}
\newtheorem{remark}[theorem]{Remark}
 \newtheorem*{ackn}{Acknowledgments}

\section{Introduction}
Complex Hessian equations constitute an important class of fully nonlinear elliptic equations arising naturally in complex geometry. They generalize the complex Monge--Amp\`ere equations, corresponding to the case $m=n$, and have been extensively studied in recent years. Let $(X,\omega)$ be a compact Hermitian manifold. Fix $1 < m \le n $. We study the complex Hessian equation
\begin{equation} \label{Eqn: Hessian}
    (\omega + \ddc \varphi )^m \wedge \omega^{n-m } = e^{f(\cdot,\varphi )} \omega^n,
\end{equation}
where $f: X \times \R \rightarrow \R$ is a smooth function. 

When $m=n$, the equation \eqref{Eqn: Hessian} has numerous applications in geometry. When $e^{f(x,\varphi)}= e^{f(x)}$, Yau’s solution \cite{Yau78} to the Calabi's Conjecture has motivated many subsequent studies on Monge--Ampère equations over the past fifty years. When $f$ depends on the unknown and it is not increasing, the existence of solutions is not guaranteed and even if they exist there may be no uniqueness. Apart from the continuity method, there are many other method which often provide weak solutions. Thus a natural question is whether solutions inherit, in some sense, regularity properties of the data. Using regularizing properties of the K\"ahler--Ricci flow,  Sz\'ekelyhidi and Tosatti \cite{Sz_kelyhidi_2011} proved that any bounded solution to \eqref{Eqn: Hessian} is smooth. Consequently, K\"ahler--Einstein currents on Fano manifolds are smooth. In the Hermitian setting, when $f$ is non-decreasing in the unknown, smooth solutions to \eqref{Eqn: Hessian} exist thanks to work of Tosatti--Weinkove \cite{TW10} culminating considerable efforts from many others (\cite{GL10}, \cite{Cher87}, etc).  In the general case (i.e. when $f$ is not increasing in $t$), the method of Sz\'ekelyhidi--Tosatti has been adapted to the Hermitian setting by Nie.  Assuming the invariance of the Monge-Amp\`ere volume,  Nie \cite{Nie2014Regularity} proved that any bounded solution to equation \eqref{Eqn: Hessian} ($m=n$) is smooth. She used  regularizing properties of the Chern--Ricci flow; and for this she needed a stability estimate which requires the condition on invariance of mass. As shown by Chiose \cite{Chi16}, the latter condition is equivalent to a curvature assumption introduced earlier by Guan--Li \cite{GL10}. The stability estimate required in the proof of Nie has been later on provided by Ko{\l}odziej--Nguyen in \cite{KNstabilityMA} without any curvature assumption; the regularity of solutions when $m=n$ is thus ensured in full generality.


We now turn to the general complex Hessian equations when $f(x,\varphi)=f(x)$. 
In the K\"ahler case, the existence of a unique smooth solution to \eqref{Eqn: Hessian} has been extensively studied by many authors (see \cite{HouHessian}, \cite{HMW10}, etc) with final resolution by Dinew--Ko{\l}odziej \cite{LiouvilleDK17}.  
In the Hermitian setting, Zhang \cite{Zhang_2017} and Sz\'ekelyhidi \cite{Sze18} independently proved existence of smooth solutions by two different a priori second order estimates.  

Adapting Sz\'ekelyhidi's work \cite{Sze18}, Ko{\l }odziej and Nguyen \cite{KN15weaksol} proved a regularity result for the weak solution in the case where $e^{f(x, \varphi )} = e^{\varphi +f(x)}$, and used it to obtain an approximation theorem for $(\omega,m)$-subharmonic functions.

In the general case when $f$ may be decreasing in the unknown, as in the Monge--Amp\`ere case, existence of uniqueness of solutions may fail. In \cite{sheng2021}, on K\"ahler manifolds, the authors proved the smoothness of bounded solutions using the parabolic method, inspired by \cite{Sz_kelyhidi_2011}. 

The pluripotential theory for complex Hessian operators has been well studied in the recent years in K\"ahler settings \cite{DK14,LU2013785,DinewLu2015,LN15}. In Hermitian settings, potential theory for $(\omega,m)$-subharmonic functions has been well developed (see \cite{KN16,GuNguyen2018,guedj2023degeneratecomplexhessianequations,KN25Holder,KN25polarset,sun2025}, etc). It follows from \cite{kolodziej2023complexhessianmeasuresrespect} that the Hessian operators for bounded $(\omega,m)$-subharmonic functions are well defined on Hermitian manifolds. It is thus natural to consider higher regularity of solutions to \eqref{Eqn: Hessian}. 

In this paper, we study the regularity of the equation \eqref{Eqn: Hessian} in the Hermitian setting. More precisely, our main result is the following: 
\begin{Main Theorem}
Let $(X,\omega)$ be a compact Hermitian manifold. Fix $2 \le m \le n$. Let $\varphi$ be a bounded $(\omega,m )$-subharmonic function solving the complex Hessian equation \eqref{Eqn: Hessian}, where $f: X \times \R \rightarrow \R$ is a smooth function. Then $\varphi$ is smooth.
\end{Main Theorem}
Our proof is inspired by \cite{BLZ26}, where they established uniform estimates of the Monge--Amp\`ere equations:
\[
(\omega +\ddc u )^n = e^{u -\varphi + f(x,\varphi)},
\]
for functions $\varphi$ smooth and uniformly bounded with estimates depending only on the $L^{\infty}$-norm of $\varphi$. 

We now outline the main idea of the proof. Approximate the bounded solution $\varphi$ of the equation \eqref{Eqn: Hessian} by a sequence of smooth $(\omega,m)$-subharmonic function $v_j$, and consider the complex Hessian equation
\begin{equation} \label{eqn: Hessian 2}
(\omega + \ddc u_j )^m \wedge \omega ^{n-m } = e^{u_j - v_j + f(x,v_j )} \omega^n.
\end{equation}
Assume that $u_j$ converges to a function $u \in \SH_m(X,\omega )$. The domination principle yields that $u = \varphi$. The main difficulty in this approach is to obtain an a priori $\mathcal{C}^2$-estimate independent of $|\nabla v_j |^2$ and $\Delta v_j$.
However, the argument developed for the complex Monge--Amp\`ere equation cannot be directly adapted to the complex Hessian equation. The main difficulty is that an admissible solution may have negative eigenvalues of its complex Hessian. In view of this difficulty, we follow the approach of Hou–Ma–Wu \cite{HMW10} and Székelyhidi \cite{Sze18} and use the largest eigenvalue of the complex Hessian, rather than its trace, in the construction of the test function for the maximum principle. 

Since the $\mathcal{C}^2$-estimate for equation \eqref{eqn: Hessian 2} need to be independent of $|\nabla v_j |^2$ and $\Delta v_j$, extra work are needed. Here, we follow the idea of \cite{BLZ26}, adding an extra term $\chi(v)$ to the test function so as to eliminate the dependence on $\nabla v$ and $\Delta v$. Comparing to the Monge--Amp\`ere equation, the main difficulty in the Hessian setting is that the eigenvalues of the Hessian matrix of $v$ are not necessarily all positive. This lack of positivity gives rise to additional negative terms in the second-order estimate, which require further control.

A further difficulty arises from the structure of the linearized operator. In the maximum-principle argument, we are led to control the quantity $F^{pp } G_{p \bar{p} }$ rather than the trace of the test function in Monge--Amp\`ere setting, where $F^{pp}$ denotes the derivative of the equation with the $pp$-entry. Since $F^{pp}$ involve elementary symmetric polynomials, the estimate requires delicate control of such polynomials. In particular, suitable tools for elementary symmetric polynomials are essential to the argument. 

Another difficulty of adapting the $\mathcal{C}^2$-estimate for Monge--Amp\`ere equation in \cite{BLZ26} to complex Hessian equations is that, the $\mathcal{C}^2$-estimate for complex Hessian equations depends on the gradient estimate. It is essential to use the identity $G_p =0$ to deal with the $\Phi(|\nabla u |^2 )$ term. As a consequence, the $-\chi'^2|v_p|^2$ term arises. Since $\chi'$ is chosen to be large enough to cancel the negative $v_{p\bar{p}}$ terms, it is delicate to vanish the negative $\chi'^2$ term. Therefore, instead of considering $\chi(v)$, we consider $\chi(v-\tilde{\varphi})$, where $\tilde{\varphi}$ is a smooth function such that $|v-\tilde{\varphi}|\le \epsilon$, for uniform constant $\epsilon$ small enough. This construction allows us to use part of $\chi''$ term to cancel $\chi'^2$ term.
\subsection*{Acknowledgment}
 The author thanks her supervisor, Hoang-Chinh Lu, for suggesting the problem, valuable discussions during the preparation of the work, and useful comments. This research is part of a PhD program funded by the PhD scholarship CSC-202308070110, and is partially supported by Institut Universitaire de France, Centre Henri Lebesgue ANR-11-LABX-0020-01, and the KRIS project of Fondation Charles Defforey. 

\subsection*{AI declaration} This research and manuscript were completed without the use of any artificial intelligence.

\subsection*{Organization of the paper}The paper is organized as follows. Section~\ref{sec:prelim} gives preliminary material on elementary symmetric polynomial and $(\omega,m )$-subharmonic functions. We also develop some inequalities for $m$-positive forms. In section~\ref{sec:est}, we establish a $\mathcal{C}^2$-estimate for equation \eqref{eqn: Hessian 2}. We prove the main result of this paper in Section~\ref{sec:mainthm}.

\section{Preliminaries} \label{sec:prelim}
\subsection{Symmetric functions and Gårding cones}
\subsubsection{Elementary symmetric polynomials}
Let $1\le m<n$. The Gårding cone associated with the $m$-th elementary symmetric function is defined by
\[
\Gamma_m = \{\lambda\in\mathbb R^n: S_1(\lambda)>0, \cdots ,S_k(\lambda)> 0,\ k=1,\ldots,m\},
\]
where
\[
S_k(\lambda) = \sum_{1\le i_1<\cdots<i_k\le n} \lambda_{i_1}\cdots\lambda_{i_k}.
\]
For $\{i_1 ,\cdots,i_t \} \subset \{1,\cdots,n \} $, we denote
\[
S_k(\lambda; i_1 ,\cdots,i_t ) \coloneqq S_k|_{ \lambda_{i_1}= \cdots = \lambda_{i_t}=0} .
\]
It is clear that the following splitting formula holds:
\begin{equation} \label{for:split}
S_m (\lambda) = S_{m}(\lambda ; i) + \lambda_i S_{m-1}(\lambda;i).
\end{equation}
We also recall the identity 
\begin{equation} \label{for:sumsplit}
    \sum_p \lambda_p S_m(\lambda;p) = (m+1)S_{m+1}(\lambda).
\end{equation}
We briefly recall some properties related to elementary symmetric polynomials that will be used later.
\begin{prop} \label{prop:espstandard}
    The following inequalities hold:
    \begin{enumerate}
        \item (Maclaurin's inequality) If $\lambda \in \Gamma_m$, then $\left( \frac{S_j}{\binom{n}{j}} \right)^{1/j} \le \left( \frac{S_i}{\binom{n}{i}} \right)^{1/i}$ for $1 \le i \le j \le m$.
        \item (Newton's inequality) If $k \ge 2$, $\lambda \in \R^n$, then 
        \begin{equation} \label{ine:Newton}
        S_k^2 \ge S_{k-1}S_{k+1} \frac{\binom{n}{k}^2} {\binom{n}{k-1}\binom{n}{k+1}}.
        \end{equation}
        \item  (Generalized Newton--Maclaurin inequality) For $\lambda \in \Gamma_m$,
        \[
 \left(\frac{S_k(\lambda)}{S_l(\lambda)} \right)^{\frac{1}{k-l} } \le \left(\frac{S_r(\lambda)}{S_s(\lambda)} \right)^{\frac{1}{r-s} },
\]
     where $0 \le s <r, 0 \le l <k, r<k, s<l$.
     \item (G\aa rding's inequality) Let $M$ be the polarization form of $S_m$. Then for all $x^{1}, \cdots,x^m \in \Gamma_m $
     \[
    M(x^1, \cdots, x^m ) \ge S_m(x^1)^{\frac{1}{m} } S_m(x^2)^{\frac{1}{m} } \cdot \cdots \cdot S_m(x^m)^{\frac{1}{m} } .
    \]
     \item For all $\lambda \in \Gamma_m$, and $j = 1,\cdots, n-1$,
     \[
     \frac{\partial S_m(\lambda)}{\partial \lambda_j}(\lambda) >0.
     \]
     Moreover, summing up $ \frac{\partial S_2(\lambda)}{\partial \lambda_j}(\lambda) $ over index $j$, we obtain
     \[
     \lambda_j \ge - (n-1 ) S_1.
     \]
     Together with $\lambda_p \le S_1$, we obtain 
     \begin{equation} \label{frml:valerabsolupse}
     |\lambda_j | \le (n-1) S_1,
      \end{equation}
     for all $\lambda \in \Gamma_m$, with $m\ge 2$.
    \end{enumerate}
\end{prop}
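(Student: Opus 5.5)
This proposition collects standard facts, so most items will be proved by citation or by a short reduction; only item (5) needs an actual argument.

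\textbf{Items (1)--(4).}
\begin{itemize}
\item Newton's inequality (2) is classical and holds for all $\lambda\in\R^n$. The standard proof goes through the polynomial $P(t)=\prod_i(t+\lambda_i)$, which has only real roots. Differentiating $P$ repeatedly (Rolle's theorem preserves real-rootedness) and reversing the variable, one reduces to a real-rooted quadratic whose discriminant inequality is exactly \eqref{ine:Newton}.
\item Maclaurin's inequality (1) follows from (2) on $\Gamma_m$. Here $S_1,\dots,S_m>0$, so the normalized quantities $E_k=S_k/\binom nk$ satisfy $E_k^2\ge E_{k-1}E_{k+1}$ with all factors positive for $k\le m-1$. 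Hence $E_k/E_{k-1}$ is non-increasing in $k$. Telescoping, starting from $E_0=1$, gives $E_j^{1/j}\le E_i^{1/i}$.
\item The generalized Newton--Maclaurin inequality (3) is the same log-concavity statement. The sequence $k\mapsto\log E_k$ is concave on $\{0,\dots,m\}$, so the slopes $\frac{\log E_k-\log E_l}{k-l}$ decrease as the endpoints increase. This is exactly (3), up to the binomial normalizations, which are themselves log-concave and are absorbed as in the standard references (e.g.\ Spruck's survey).
\item Gårding's inequality (4) is Gårding's theorem for hyperbolic polynomials. $S_m$ is hyperbolic with respect to $(1,\dots,1)$, and $\Gamma_m$ is its hyperbolicity cone, so I would simply cite Gårding.
\end{itemize}

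\textbf{Item (5): positivity of the partial derivatives.}
We have $\partial S_m/\partial\lambda_j=S_{m-1}(\lambda;j)$. The key step is to show that $\lambda\in\Gamma_m$ implies $(\lambda;j)\in\Gamma_{m-1}$ as a vector in $\R^{n-1}$, and in particular $S_{m-1}(\lambda;j)>0$. The standard argument uses the fact that $\Gamma_m$ is the connected component of $\{S_m>0\}$ containing $(1,\dots,1)$. Along the segment from $(1,\dots,1)$ to $\lambda$ inside $\Gamma_m$, the polynomial $t\mapsto S_m(\lambda+t e_j)$ is real-rooted in $t$, and its roots interlace those of its derivative in $t$. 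This gives $S_{m-1}(\lambda;j)>0$. An alternative route is induction on $m$ via the splitting formula \eqref{for:split}, with $m=1$ trivial. This interlacing step is the only place requiring care.

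\textbf{Item (5): the bound \eqref{frml:valerabsolupse}.}
Take $m\ge2$, so that $\lambda\in\Gamma_2$. Apply the positivity above with $m=2$:
\[
\frac{\partial S_2}{\partial\lambda_j}=S_1-\lambda_j>0 \quad\text{for every } j.
\]
This gives $\lambda_j<S_1$, which is the upper bound. For the lower bound, write $\lambda_j=S_1-\sum_{i\ne j}\lambda_i$ and use $\lambda_i<S_1$ for each of the $n-1$ indices $i\ne j$:
\[
\lambda_j> S_1-(n-1)S_1=-(n-2)S_1\ge-(n-1)S_1 .
\]
Combining the two bounds, and using $S_1>0$ so that $S_1\le(n-1)S_1$, we get $|\lambda_j|\le(n-1)S_1$ for all $j$.
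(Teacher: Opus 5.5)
Your plan is the paper's. The paper gives no proofs of (1)--(4) or of the positivity in (5) beyond citing G\aa rding and Caffarelli--Nirenberg--Spruck, and it gets \eqref{frml:valerabsolupse} from $\partial S_2/\partial\lambda_i=S_1-\lambda_i>0$. Your treatment of that last step (summing over $i\neq j$, which even gives $\lambda_j>-(n-2)S_1$) is correct, and it is the precise form of the paper's ``summing over $j$''. Your reductions of (1) and (3) to Newton's inequality are also fine: $j\mapsto\log\binom nj$ is concave, hence so is $j\mapsto\log S_j$ on $\{0,\dots,m\}$.

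The one argument you supply yourself, for $S_{m-1}(\lambda;j)>0$, does not work as written. By \eqref{for:split}, $S_m(\lambda+te_j)=S_m(\lambda;j)+(\lambda_j+t)S_{m-1}(\lambda;j)$ is affine in $t$, so there is no interlacing to exploit. At best, $\lambda+te_j\in\Gamma_m$ for $t\ge0$ yields the non-strict $S_{m-1}(\lambda;j)\ge0$.

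Your induction ``via the splitting formula'' also stalls when $\lambda_j>0$. The missing ingredient is Newton's inequality for the reduced vector. Let $\mu\in\R^{n-1}$ be $\lambda$ with $\lambda_j$ deleted, and assume inductively $S_k(\mu)>0$ for $k\le m-2$. Suppose $S_{m-1}(\mu)\le0$.
\begin{itemize}
\item From $S_{m-1}(\lambda)=S_{m-1}(\mu)+\lambda_jS_{m-2}(\mu)>0$ you get $\lambda_jS_{m-2}(\mu)>|S_{m-1}(\mu)|$, so $\lambda_j>0$.
\item From $S_m(\lambda)=S_m(\mu)+\lambda_jS_{m-1}(\mu)>0$ you get $S_m(\mu)>\lambda_j|S_{m-1}(\mu)|\ge0$.
\end{itemize}
For $m=n$ this already contradicts $S_n(\mu)=0$. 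For $m<n$, combining the two gives $S_{m-2}(\mu)S_m(\mu)>S_{m-1}(\mu)^2$. This contradicts Newton's inequality for $\mu$, since its binomial factor is $\ge1$ and $S_{m-2}(\mu)S_m(\mu)>0$. For $m=2$ this uses Newton at $k=1$, which \eqref{ine:Newton} excludes as stated but your real-rootedness proof covers. Either add this argument, or simply cite the fact as the paper does.
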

We refer the reader to \cite{gar59,CNSIII85} for proofs of the above proposition and for further properties of the elementary symmetric polynomials.

\subsubsection{$m$-positive cones associated with differential forms} Let $\Omega$ be a bounded open set in $\C^n$. Given a Hermitian metric $\omega$ on $\Omega$, a real $(1,1)$-form $\alpha$ on $\Omega$ is said to be \emph{$m$-positive with respect to $\omega$} if
\[
\alpha^k \wedge \omega^{n-k} \ge 0, \qquad k=1,\ldots,m,
\]
at every point of $\Omega$.

Equivalently, at any point $P\in\Omega$, in local coordinates in which $\omega$ is represented by the identity and $\alpha$ is diagonalized as
\[
\alpha = \sqrt{-1}\sum_{j=1}^n a_j\, dz_j \wedge d\bar z_j,
\]
the vector $a=(a_1,\ldots,a_n)$ belongs to the closure the cone $\Gamma_m$.
The positive cone $\Gamma_m(\omega)$ associated with the metric $\omega$ is defined by 
\[
\left\{  \gamma \text{ real }(1,1) \text{-form}: \gamma^k\wedge \omega^{n-k}>0, k=1,\cdots,m \right\}.
\]
It follows from Gårding's inequality \cite{gar59} that if $\gamma_0, \gamma_1, \cdots, \gamma_{m-1}\in \Gamma_{m}(\omega)$, then 
\[
\gamma_0 \wedge\gamma_1\wedge \cdots \wedge\gamma_{m-1} \wedge\omega^{n-m} >0. 
\]
In this subsection, we prove an optimal lower bound for  $\alpha^{m-1} \wedge \gamma \wedge \omega^{n-m}$.
\begin{lemma} \label{lem: elemhyppol}
Let $\alpha$ be a $m$-positive form with respect to $\omega$ satisfying $S_m(\alpha) >0$. Then there exists a constant $\epsilon>0$, depending on the elementary symmetry polynomials of $\alpha$, such that for every  $1 \le k \le m$,
\[
\alpha - \epsilon \omega\in \Gamma_k(\omega) .
\]
Moreover, the constant $\epsilon$ can be written as $\epsilon = \frac{1}{k} \frac{\sigma_{k}(\alpha)}{\sigma_{k-1}(\alpha)}$, where $\sigma_j(\alpha)$ denotes the normalized $j$-th symmetric polynomial
\[
\sigma_j( \alpha) = \frac{\alpha^j \wedge\omega^{n-j }}{\omega^n }.
\]
\end{lemma}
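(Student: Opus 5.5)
The plan is to reduce everything to the one-variable polynomial $t\mapsto \sigma_k(\alpha - t\omega)$ and use hyperbolicity of $S_k$ in the direction $e=(1,\dots,1)$, which is Gårding's theory behind Proposition~\ref{prop:espstandard}. Fix a point and coordinates in which $\omega$ is the identity and $\alpha$ is diagonal with eigenvalue vector $\lambda$. Then $\alpha - t\omega$ corresponds to $\lambda - te$, and $\sigma_j(\alpha - t\omega)=S_j(\lambda-te)/\binom{n}{j}$.

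First I check that $\lambda\in\Gamma_m$, and not merely in its closure. Since $\lambda\in\overline{\Gamma_m}$ and $S_m(\lambda)>0$, the point $\lambda$ lies in the component of $\{S_m>0\}$ containing $e$, which is $\Gamma_m$. Hence $\lambda\in\Gamma_k$ for every $k\le m$, and in particular $\sigma_{k-1}(\lambda)>0$, so $\epsilon$ is well defined.

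Now fix $1\le k\le m$ and set $p(t)=\sigma_k(\lambda-te)$. I use three standard facts.
\begin{enumerate}
\item By hyperbolicity of $S_k$ with respect to $e$, $p$ has degree $k$ with leading coefficient $(-1)^k$, and all its roots $t_1,\dots,t_k$ are real.
\item Since $\Gamma_k$ is the connected component of $\{S_k>0\}$ containing $e$ and is convex, for $\lambda\in\Gamma_k$ all roots satisfy $t_i>0$. Moreover, $\lambda-te\in\Gamma_k$ exactly when $t<t_{\min}:=\min_i t_i$.
\item The derivative identity $\frac{d}{dt}\sigma_k(\lambda-te)=-k\,\sigma_{k-1}(\lambda-te)$ holds for the normalized polynomials.
\end{enumerate}
By the first fact we may write $p(t)=\prod_{i=1}^k (t_i-t)$. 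Taking logarithmic derivatives at $t=0$ gives
\[
\sum_{i=1}^k \frac{1}{t_i} \;=\; -\frac{p'(0)}{p(0)} \;=\; \frac{k\,\sigma_{k-1}(\lambda)}{\sigma_k(\lambda)} .
\]
Since all $t_i>0$, we have $1/t_{\min}\le\sum_i 1/t_i$. Therefore
\[
t_{\min}\ \ge\ \frac{1}{k}\frac{\sigma_k(\lambda)}{\sigma_{k-1}(\lambda)}=\epsilon ,
\]
and equality can hold only if $k=1$. For $k\ge2$ this gives $\epsilon<t_{\min}$, so $\lambda-\epsilon e\in\Gamma_k$. Since $\Gamma_k\subset\Gamma_j$ for $j\le k$, we also get $\alpha-\epsilon\omega\in\Gamma_j(\omega)$ for all $j\le k$. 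This is the statement for each $k$ with the corresponding $\epsilon$. For a single uniform $\epsilon$, one takes the minimum over $k$ of these quantities. By the Newton--Maclaurin inequality (Proposition~\ref{prop:espstandard}(3)) the ratios $\sigma_k/\sigma_{k-1}$ are nonincreasing in $k$, so the minimum is essentially attained at $k=m$.

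The main obstacle is the hyperbolic-polynomial input: real-rootedness of $p$, positivity of its roots, and the characterization of $\Gamma_k$ as the set where one can move along $-e$ up to $t_{\min}$. I would quote these from Gårding \cite{gar59} rather than reprove them.

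There is also a boundary case at $k=1$. There $\epsilon=\sigma_1(\lambda)$ and $\lambda-\epsilon e$ has trace zero, so it lies only in $\overline{\Gamma_1}$. I would handle this by noting that the conclusion should be read in the closed cone when $k=1$, or by replacing $\epsilon$ with any smaller positive constant. For $k\ge2$ the inequality $\epsilon<t_{\min}$ is strict, so the open-cone statement holds as written.
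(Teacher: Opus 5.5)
Your argument is correct, but it takes a different route from the paper.

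\textbf{The paper's route.} The paper works only with coefficients. It expands $\sigma_k(\alpha-\epsilon\omega)=\sum_{j=0}^k\binom{k}{j}(-\epsilon)^j\sigma_{k-j}(\alpha)$ and groups consecutive terms in pairs $\epsilon^j\bigl(\binom{k}{j}\sigma_{k-j}-\binom{k}{j+1}\epsilon\,\sigma_{k-j-1}\bigr)$. Newton's inequality (monotonicity of $\sigma_i/\sigma_{i-1}$) together with monotonicity of $\binom{k}{j}/\binom{k}{j+1}$ shows that the first pair is the binding constraint. So $\epsilon\le\frac1k\frac{\sigma_k}{\sigma_{k-1}}$ makes every pair nonnegative. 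Membership in $\Gamma_k(\omega)$ is then assembled by applying this to each $j\le k$ and taking the minimum of the $\epsilon_j$.

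\textbf{Your route.} You import Gårding's hyperbolicity theory: the roots of $t\mapsto\sigma_k(\lambda-te)$ are real and positive, and $\Gamma_k$ along the ray is exactly $\{t<t_{\min}\}$. After that, the log-derivative identity $\sum_i 1/t_i=k\sigma_{k-1}/\sigma_k$ gives the bound in one line.

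\textbf{Comparison.} The paper's proof is self-contained modulo Newton's inequality. Yours uses heavier input, but it buys three things. It explains where the constant comes from: a harmonic-mean lower bound on the distance from $\lambda$ to $\partial\Gamma_k$ along $-e$. It gives membership in the whole cone $\Gamma_k$ at once, rather than positivity of $\sigma_k$ alone. And it makes the strictness for $k\ge 2$ transparent.

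\textbf{The case $k=1$.} Your remark here is accurate and applies to the paper's argument too. With $\epsilon_1=\sigma_1$, the paper's expansion gives $\sigma_1(\alpha-\epsilon_1\omega)=0$, so the per-$k$ claim at $k=1$ holds only in the closed cone. The uniform choice $\epsilon=\frac1m\frac{\sigma_m}{\sigma_{m-1}}$ with $m\ge 2$ is unaffected.

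\textbf{Small refinement.} The minimum over $k$ is attained exactly at $k=m$, since both $1/k$ and $\sigma_k/\sigma_{k-1}$ are nonincreasing. In your framework you do not even need the Newton--Maclaurin comparison for the uniform statement: $\lambda-\epsilon_m e\in\Gamma_m\subset\Gamma_k$ for all $k\le m$.
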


\begin{proof}
 Expanding $(\alpha - \epsilon_k\omega )^k \wedge \omega^{n-k}$, we obtain
 \begin{equation} \label{eqn:expandmpositive}
 \begin{aligned}
  \frac{(\alpha - \epsilon_k\omega )^k \wedge \omega^{n-k}}{\omega^n} = \frac{\sum_{j=0}^k\binom{k}{j}\alpha^{k-j }( - \epsilon_k\omega )^j \wedge \omega^{n-k}}{\omega^n} = \sum_{j=0}^k \binom{k}{j} \frac{(-\epsilon_k)^j \alpha^{k-j }\wedge \omega^{n-k+j }}{\omega^n} \\
  = \binom{k}{0} \sigma_k(\alpha) - \binom{k}{1}\epsilon_k \sigma_{k-1}(\alpha)+ \cdots + \binom{k}{k-1} (-\epsilon_k )^{k-1} \sigma_1(\alpha)  +(-\epsilon_k)^k.
  \end{aligned}
 \end{equation}
 When $k$ is odd, it follows that 
 \[
  \frac{(\alpha - \epsilon_k\omega )^k \wedge \omega^{n-k}}{\omega^n} =\left(  \sigma_k(\alpha) - k\epsilon_k \sigma_{k-1}(\alpha)\right)+ \cdots + (\epsilon_k )^{k-1} (k \sigma_1(\alpha)  -\epsilon_k).
 \]
 To ensure that $\alpha - \epsilon \omega \in \Gamma_m(\omega)$, it suffices that 
 \[
 \epsilon_k \le \frac{ \binom{k}{0} \sigma_k(\alpha)}{\binom{k}{1}\sigma_{k-1}(\alpha)},\epsilon_k \le \frac{\binom{k}{2} \sigma_{k-2}(\alpha)}{\binom{k}{3} \sigma_{k-3}(\alpha)}, \cdots , \epsilon_k \le k \sigma_1(\alpha).
 \]
 Moreover, Newton's inequality \eqref{ine:Newton} yields that $k \mapsto \frac{\sigma_k(\alpha)}{\sigma_{k-1}(\alpha)}$ is decreasing in $k$. A direct computation implies that $\frac{\binom{k}{j}}{\binom{k}{j+1}} = \frac{j+1}{k-j}$ is increasing in $j$. It thus suffices to choose 
 \[
 \epsilon_k = \frac{1}{k} \frac{\sigma_k(\alpha)}{\sigma_{k-1}(\alpha)}.
 \]
 When $k$ is even, then \eqref{eqn:expandmpositive} can be written as 
 \[
 \begin{aligned}
 \frac{(\alpha - \epsilon_k\omega )^k \wedge \omega^{n-k}}{\omega^n} = \left( \binom{k}{0} \sigma_k(\alpha) - \binom{k}{1}\epsilon_k \sigma_{k-1}(\alpha)\right)+ \cdots \\+  \left((\epsilon_k )^{k-2} \binom{k}{k-2}\sigma_2(\alpha)- \binom{k}{k-1}\sigma_1(\alpha)\epsilon_k   \right)+\epsilon_k^k.
  \end{aligned}
 \]
 Together with Newton's inequality \eqref{ine:Newton}, it therefore suffices to choose 
 \[
 \epsilon_k = \frac{1}{k}\frac{\sigma_k(\alpha)}{\sigma_{k-1}(\alpha)}.
 \]
 For every $1\le k \le m$, there exists $\epsilon_k = \frac{1}{k} \frac{\sigma_k(\alpha)}{\sigma_{k-1 }(\alpha)}$ such that $\alpha - \epsilon_k \omega \in \Gamma_k(\omega)$. To make sure that $\alpha - \epsilon \omega \in \Gamma_k(\omega)$ for all $k$, it is enough to take $\epsilon = \min_k{\epsilon_k}$. It follows from Newton's inequality \eqref{ine:Newton} again that $\frac{1}{k} \frac{\sigma_k(\alpha)}{\sigma_{k-1 }(\alpha)}$ is the smallest. This concludes the argument. 
\end{proof}
As a consequence, one can prove the following property for $m$-positive forms, which we will use hereafter.
\begin{prop} \label{prop:espforms}
    Let $\alpha, \gamma \in \Gamma_m(\omega)$. Suppose that $S_m (\alpha)>0$. Then 
  \[
  \frac{\alpha^{m-1} \wedge \gamma \wedge \omega^{n-m}}{\omega^n} \cdot \frac{\alpha \wedge \omega^{n-1} }{\omega^n} \ge \frac{1}{m!}\frac{\alpha^m \wedge \omega^{n-m} }{\omega^n} \cdot \frac{\gamma \wedge \omega^{n-1} }{\omega^n}.
  \]
\end{prop}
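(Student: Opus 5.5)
The plan is to reduce the statement to an inequality for eigenvalue vectors and then use the structure of the gradient of $S_m$. Fix a point. Choose coordinates in which $\omega$ is the identity and $\alpha=\sqrt{-1}\sum_j\lambda_j dz_j\wedge d\bar z_j$ is diagonal, with $\lambda\in\Gamma_m$ and $\lambda_1\ge\lambda_2\ge\cdots\ge\lambda_n$. Write $\gamma=\sqrt{-1}\sum\gamma_{p\bar q}dz_p\wedge d\bar z_q$. Up to dimensional constants, which I will track so as to produce the factor $\frac1{m!}$, the three quantities in the statement are
\[
\frac{\alpha^{m-1}\wedge\gamma\wedge\omega^{n-m}}{\omega^n}=c_1\sum_p S_{m-1}(\lambda;p)\,\gamma_{p\bar p},\qquad
\frac{\alpha\wedge\omega^{n-1}}{\omega^n}=c_2\,S_1(\lambda),\qquad
\frac{\alpha^m\wedge\omega^{n-m}}{\omega^n}=c_3\,S_m(\lambda).
\]
Only the diagonal entries of $\gamma$ enter. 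The vector $(\gamma_{p\bar p})_p$ need not lie in $\Gamma_m$, but its sum equals $\operatorname{tr}_\omega\gamma>0$.

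Set $a_p=S_{m-1}(\lambda;p)=\partial S_m/\partial\lambda_p$. By Proposition \ref{prop:espstandard}(5) every $a_p$ is positive. The ordering of $\lambda$ gives $a_1\le a_2\le\cdots\le a_n$, because
\[
a_p-a_1=(\lambda_1-\lambda_p)\,S_{m-2}(\lambda;1,p)\ge 0 .
\]
Next I will prove the Hou--Ma--Wu type bound $\lambda_1S_{m-1}(\lambda;1)\ge \frac{m}{n}S_m(\lambda)$. This follows from the splitting formula \eqref{for:split}, $S_m=S_m(\lambda;1)+\lambda_1S_{m-1}(\lambda;1)$, together with the fact that $\lambda_1$ is the largest entry, after averaging over indices via \eqref{for:sumsplit}. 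Combined with $\lambda_1\le S_1(\lambda)$, this gives
\[
a_1\,S_1(\lambda)\ge \frac{m}{n}S_m(\lambda).
\]
So if I can show $\sum_p a_p\gamma_{p\bar p}\ge a_1\sum_p\gamma_{p\bar p}$, then multiplying by $S_1(\lambda)$ gives the claim, once the constants $c_i$ are compared.

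The main obstacle is this last inequality,
\[
\sum_p (a_p-a_1)\gamma_{p\bar p}\ge 0 .
\]
The coefficients $a_p-a_1$ are nonnegative, but the diagonal entries $\gamma_{p\bar p}$ may be negative. I would first try to use Lemma \ref{lem: elemhyppol} to perturb $\alpha$ toward a multiple of $\omega$. Write $\alpha=\alpha'+\epsilon\omega$ with $\alpha'\in\overline{\Gamma_m(\omega)}$ and expand $\alpha^{m-1}\wedge\gamma\wedge\omega^{n-m}$ multilinearly. By Gårding's inequality each mixed term $\alpha'^{\,m-1-j}\wedge\omega^{j}\wedge\gamma\wedge\omega^{n-m}$ is nonnegative. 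The goal is to keep enough of the terms with many factors of $\omega$, since those are proportional to $\operatorname{tr}_\omega\gamma$.

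If the constant $\epsilon=\frac1m\sigma_m/\sigma_{m-1}$ turns out too small for this (it cannot simply be raised to the power $m-1$), my fallback is a case split on the negative entries. Entries with $\gamma_{p\bar p}<0$ are bounded by $(n-1)\operatorname{tr}\gamma$ via \eqref{frml:valerabsolupse} applied to the diagonal of $\gamma$, which lies in $\Gamma_2$-type position since $\gamma\in\Gamma_m$, $m\ge2$. Their weights $a_p-a_1$ are then dominated using $S_{m-2}(\lambda;1,p)\le C\,S_{m-1}(\lambda;1)/\lambda_1$. This absorbs the negative part into a fraction of $a_1\operatorname{tr}\gamma$, at the cost of a worse, but still dimensional, constant. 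Checking that this constant can be taken to be $\frac1{m!}$ is the step I expect to require the most care.
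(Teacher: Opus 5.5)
The reduction to eigenvalues is fine. With $c_1=\frac{(m-1)!(n-m)!}{n!}$, $c_2=\frac1n$ and $c_3=\frac{m!(n-m)!}{n!}$, the claim becomes $S_1(\lambda)\sum_p a_p\gamma_{p\bar p}\ge\frac{1}{(m-1)!}S_m(\lambda)\operatorname{tr}_\omega\gamma$. The problem is the step you single out as the main obstacle: $\sum_p(a_p-a_1)\gamma_{p\bar p}\ge 0$ is false, and so is the weaker bound your fallback aims for.

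Take $n=3$, $m=2$, $\gamma=\alpha$ and $\lambda=(1,1,-t)$ with $0<t<\frac12$, so $\lambda\in\Gamma_2$.
\begin{itemize}
\item Then $a=(1-t,1-t,2)$ and $\sum_p(a_p-a_1)\gamma_{p\bar p}=-t(1+t)<0$.
\item More decisively, $\sum_p a_p\gamma_{p\bar p}=2S_2(\lambda)=2(1-2t)\to 0$ as $t\to\frac12$, while $a_1\operatorname{tr}_\omega\gamma=(1-t)(2-t)\to\frac34$.
\item So no bound $\sum_p a_p\gamma_{p\bar p}\ge c\,a_1\operatorname{tr}_\omega\gamma$ with a dimensional $c>0$ can hold. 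The correct lower bound must degenerate like $S_m(\lambda)/S_1(\lambda)$.
\end{itemize}
In this example $\gamma$ is diagonal and its diagonal lies in $\Gamma_m$. That is in fact always true, by Schur--Horn, contrary to your remark. So the failure is not caused by off-diagonal entries.

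Your fallback uses only $\operatorname{tr}_\omega\gamma>0$ and $|\gamma_{p\bar p}|\le (n-1)\operatorname{tr}_\omega\gamma$. Both hold for every $\gamma\in\Gamma_2(\omega)$. For $m\ge 3$, however, the proposition fails on $\Gamma_2(\omega)$: with $n=m=3$, $\lambda=(1,1,\delta)$ and $\gamma=\mathrm{diag}(1,1,-\frac25)$, you get $\sum_p a_p\gamma_{p\bar p}=2\delta-\frac25<0$. So the $m$-positivity of $\gamma$ has to enter through G\aa rding's inequality, not through entrywise bounds. Separately, even granting the middle step, your chain $a_1S_1\ge\lambda_1 a_1\ge\frac mn S_m$ only gives the constant $\frac mn$ where $\frac{1}{(m-1)!}$ is needed. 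That is too weak once $n>m!$.

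The missing idea is the one you began and then dropped. Do not shift by the single $\epsilon=\frac1m\sigma_m/\sigma_{m-1}$; instead peel off one factor of $\alpha$ at a time, using only $k$-positivity at the $k$-th stage.
\begin{itemize}
\item By Lemma~\ref{lem: elemhyppol}, $\alpha-\epsilon_k\omega\in\Gamma_k(\omega)$ with $\epsilon_k=\frac1k\sigma_k(\alpha)/\sigma_{k-1}(\alpha)$.
\item Since $\alpha$, $\alpha-\epsilon_k\omega$ and $\gamma$ all lie in $\overline{\Gamma_k(\omega)}$, G\aa rding's inequality gives $\alpha^{k-2}\wedge(\alpha-\epsilon_k\omega)\wedge\gamma\wedge\omega^{n-k}\ge 0$.
\item Hence $\alpha^{k-1}\wedge\gamma\wedge\omega^{n-k}\ge \epsilon_k\,\alpha^{k-2}\wedge\gamma\wedge\omega^{n-k+1}$.
\item Iterating from $k=m$ down to $k=2$, the factors telescope: $\prod_{k=2}^m\epsilon_k=\frac{1}{m!}\frac{\sigma_m(\alpha)}{\sigma_1(\alpha)}$.
\end{itemize}
This is exactly the statement, including the constant $\frac{1}{m!}$, and it is the paper's induction.
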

\begin{proof}
    We will prove the inequality
    \[
    \frac{\alpha^{k-1} \wedge \gamma \wedge \omega^{n-k}}{\omega^n} \cdot \frac{\alpha \wedge \omega^{n-1} }{\omega^n} \ge \frac{1}{k!} \frac{\alpha^k \wedge \omega^{n-k} }{\omega^n} \cdot \frac{\gamma \wedge \omega^{n-1} }{\omega^n}
    \]
    for all $k \in \{ 1, \cdots, m \}$, by induction in $k$. When $k=1$, the inequality follows trivially.
    Assume now that, for $k\leq m-1$,
     \[
    \frac{\alpha^{k-1} \wedge \gamma \wedge \omega^{n-k}}{\omega^n} \cdot \frac{\alpha \wedge \omega^{n-1} }{\omega^n} \ge \frac{1}{k!} \frac{\alpha^k \wedge \omega^{n-k} }{\omega^n} \cdot \frac{\gamma \wedge \omega^{n-1} }{\omega^n}.
    \]
    We now show that
    \[
    \frac{\alpha^{k} \wedge \gamma \wedge \omega^{n-k-1}}{\omega^n} \cdot \frac{\alpha \wedge \omega^{n-1} }{\omega^n} \ge \frac{1}{(k+1)!}\frac{\alpha^{k+1} \wedge \omega^{n-k-1} }{\omega^n} \cdot \frac{\gamma \wedge \omega^{n-1} }{\omega^n}.
    \]
    Lemma~\eqref{lem: elemhyppol} implies that there exists $\epsilon_{k+1} = \frac{1}{k+1}\frac{\sigma_{k+1}(a)}{\sigma_{k}(a)}$ such that $\alpha - \epsilon_{k+1} \omega$ is $(k+1)$-positive. Therefore,
    \begin{equation} \label{eqn:induc}
    \begin{aligned}
    \alpha^{k} \wedge \gamma \wedge \omega^{n-k-1} &= \alpha^{k-1}\wedge (\alpha - \epsilon_{k+1} \omega + \epsilon_{k+1} \omega) \wedge \gamma \wedge \omega^{n-k-1} \\
    &\ge \alpha^{k-1} \wedge \epsilon_{k+1} \omega \wedge \gamma \wedge \omega^{n-k-1 }  \\
    &=  \frac{1}{k+1}  \frac{\sigma_{k+1}}{\sigma_{k}} \alpha^{k-1} \wedge \gamma \wedge \omega^{n-k-1}.
    \end{aligned}
    \end{equation}
    By the induction hypothesis, we deduce that
    \[
    \alpha^{k-1} \wedge \gamma \wedge \omega^{n-k} \ge \frac{1}{k!} \frac{\sigma_k}{\sigma_1} \gamma \wedge \omega^{n-1}.
    \]
    Together with \eqref{eqn:induc}, we conclude that
    \[
    \alpha^{k} \wedge \gamma \wedge \omega^{n-k-1} \ge \frac{1}{(k+1)!} \frac{\sigma_{k+1}(\alpha)}{\sigma_1(\alpha)} \gamma \wedge \omega^{n-1},
    \]
    and this completes the proof.
\end{proof} 
\begin{remark}
   Proposition~\eqref{prop:espforms} is a generalization of Lemma 2.2 in \cite{KN15weaksol}, where they assume that $\gamma$ is positive. 
\end{remark}
\begin{remark}
   Proposition~\eqref{prop:espforms} is equivalent to saying that for all $\lambda, \mu \in \Gamma_m$, it follows that
   \[
   \sum_{p} \sigma_{m-1}( \lambda ;p) \mu_{p} \sigma_1(\lambda) \ge \frac{1}{m!} \sigma_m(\lambda ) \sigma_1 (\mu).
   \]
   Therefore,
   \[
   \sum_{p} S_{m-1}( \lambda ;p) \mu_{p} S_1(\lambda) \ge \frac{1}{(m-1)!n} S_m(\lambda ) S_1 (\mu).
   \]
\end{remark}

\subsection{Complex Hessian equations}
\subsubsection{Definition}
We recall the definition of $(\omega,m)$-subharmonic functions following \cite{GuNguyen2018} and briefly review some results related to both $\omega$-plurisubharmonic and $(\omega,m)$-subharmonic functions that will be used later.

Fix an integer $1 \le m \le n$. Fix $\Omega $ an open set in $\C^n$. Given a hermitian metric $\omega$ on $\Omega$, a $\mathcal{C}^2(\Omega)$ function $u : \Omega \rightarrow  \R$ is called 
harmonic with respect to $\omega$ if $\ddc u \wedge \omega^{n-1}=0$ at all points in $\Omega$.

\begin{definition}
A function $u:\Omega \rightarrow \{ -\infty\} \cup \R$ is subharmonic with respect to $\omega$ if:
\begin{enumerate}[label=(\alph*)]
\item $u$ is upper semicontinuous and $u \in L^1_{loc}(\Omega)$;
\item for every relatively compact open set $D \Subset \Omega$ and every function $h\in \mathcal{C}^0(D)$ that is harmonic with respect to $\omega$ on $D$ the following implication holds:
\[
u \le h \text{ on }\partial D \implies u \le h \text{ in }D.
\]
\end{enumerate}
\end{definition}

\begin{definition}
A function $\varphi : \Omega \rightarrow  \{ -\infty\} \cup \R$ is quasi-subharmonic with respect to $\omega$ if locally $\varphi = u + \rho$, where $u$ is subharmonic with respect to $\omega $ and $\rho$ is smooth.

A function $\varphi$ is $\alpha$-subharmonic with respect to $\omega$ if $\varphi$ is quasi-subharmonic with respect to $\omega$ and $(\alpha + \ddc \varphi) \wedge \omega^{n-1}\ge 0$ in the sense of distributions.
\end{definition}
The positive cone $\Gamma_m(\omega)$ associated with the metric $\omega$ is defined by 
\[
\left\{  \gamma \text{ real }(1,1) \text{-form}: \gamma^k\wedge \omega^{n-k}>0, k=1,\cdots,m \right\}.
\]
It follows from Gårding's inequality\cite{gar59} that if $\gamma_0, \gamma_1, \cdots, \gamma_{m-1}\in \Gamma_{m}(\omega)$, then 
\[
\gamma_0 \wedge\gamma_1\wedge \cdots \wedge\gamma_{m-1} \wedge\omega^{n-m} >0. 
\]
One can write $\widetilde{\omega}= \gamma_1\wedge \cdots \wedge\gamma_{m-1} \wedge\omega$, and it is a strictly positive $(n-1,n-1)$-form on $\Omega$.
\begin{definition}
A function $\varphi: \Omega \rightarrow [ -\infty, + \infty)$ is called $(\alpha,m)$-subharmonic with respect to $\omega$ if $\varphi$ is $\alpha$-subharmonic with respect to $\widetilde{\omega}$ in $\Omega$ for all 
$\widetilde{\omega}$ of the form $\widetilde{\omega}^{n-1}= \gamma_1\wedge \cdots \wedge\gamma_{m-1} \wedge\omega$, where $\gamma_1, \cdots,\gamma_{m-1} \in \Gamma_m(\omega)$.

A function $u : X \rightarrow \mathbb{R} \cup \{- \infty \} $ is called $(\alpha,m)$-subharmonic on $X$ if $u$ is $(\alpha,m)$-subharmonic on each local chart $U$ of $X$.
\end{definition}
The set of all locally integrable functions on $U$ which are $(\alpha,m)$-subharmonic with respect to $\omega$ in $U$ is denoted by $\SH_{\omega,m}(U,\alpha)$. In many contexts, such as \cite{guedj2023degeneratecomplexhessianequations}, the forms $\omega$ and $\alpha$ may differ.

However, in this paper, we focus on the set $\SH_{\omega,m}(U,\omega)$. To simplify the notations, we denote by $\SH_m(U,\omega)$ the set of all $(\omega,m)$-subharmonic functions with respect to $\omega$ in $U$. The set of all $\omega$-plurisubharmonic functions on $U$ is denoted by $\PSH(U, \omega)=\SH_n(U,\omega)$.

For notational convenience, we denote $\omega_{u} \coloneqq \omega + \ddc u$.

\begin{remark}
  By Gårding's inequality\cite{gar59}, if $u \in \mathcal{C}^2(X)$, then $u$ is $(\omega,m)$-subharmonic with respect to $\omega$ on $X$ if and only if the associated form $\omega _u$ belongs to the closure of $\Gamma_m(\omega) $.
\end{remark}
\subsubsection{Domination principle}
\begin{prop}[Domination principle] \label{prop: DP}
Assume that $u, v \in \SH_m(X, \omega) \cap L^{\infty}(X)$ satisfy
\[
1_{\{u<v\}}\left(\omega+d d^c u\right)^m \wedge \omega^{n-m} \leq c 1_{\{u<v\}}\left(\omega+d d^c v\right)^m \wedge \omega^{n-m},
\]
for some constant $0 \le c<1$. Then $u \ge v$.
\end{prop}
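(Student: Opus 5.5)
We prove the domination principle by contradiction, in the standard way (following Ko{\l}odziej--Nguyen and Guedj--Lu for the Hermitian Monge--Amp\`ere case). The main tools are the comparison principle for bounded $(\omega,m)$-subharmonic functions on compact Hermitian manifolds and the well-definedness and continuity of Hessian measures of bounded functions \cite{kolodziej2023complexhessianmeasuresrespect}.

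Suppose $E:=\{u<v\}$ is nonempty. Since both functions are bounded and quasi-subharmonic, $E$ then has positive Lebesgue measure, and hence positive $m$-capacity. We work with perturbed competitors. For $0<\delta<1$ small, set
\[
v_\delta := (1-\delta)v + \delta\rho - s,
\]
where $\rho$ is a fixed smooth function and $s\ge 0$ is a shift. Since $\omega+\ddc v_\delta = (1-\delta)\omega_v + \delta(\omega+\ddc\rho)$, the form $\omega_{v_\delta}$ is $m$-positive, and in fact strictly $m$-positive up to a term $\delta\theta$ with $\theta>0$. We choose $s$ so that $\{u<v_\delta\}$ is a nonempty set, slightly smaller than $E$.

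By multilinearity of the Hessian measure and G\aa rding's inequality (Proposition~\ref{prop:espstandard}(4)),
\[
\omega_{v_\delta}^m\wedge\omega^{n-m} \ge (1-\delta)^m\,\omega_v^m\wedge\omega^{n-m} + \delta^m\,\theta^m\wedge\omega^{n-m}.
\]
On $\{u<v_\delta\}\subset E$ the hypothesis gives
\[
\omega_u^m\wedge\omega^{n-m}\le c\,\omega_v^m\wedge\omega^{n-m}.
\]
Choosing $\delta$ so small that $c<(1-\delta)^m$, we obtain on $\{u<v_\delta\}$ the strict inequality
\[
\omega_u^m\wedge\omega^{n-m} \le \omega_{v_\delta}^m\wedge\omega^{n-m} - \delta^m\theta^m\wedge\omega^{n-m},
\]
with the last term a strictly positive volume form.

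We then invoke the modified comparison principle valid on Hermitian manifolds. It states that for bounded $\phi,\psi\in\SH_m(X,\omega)$ and small $\varepsilon>0$, on a sublevel set $\{\phi<\psi+S+\varepsilon\}$ (with $S=\inf(\phi-\psi)$), one has
\[
\int_{\{\phi<\psi+S+\varepsilon\}}\omega_\psi^m\wedge\omega^{n-m}\le (1+C\varepsilon)\int_{\{\phi<\psi+S+\varepsilon\}}\omega_\phi^m\wedge\omega^{n-m}.
\]
Here the error term accounts for the torsion terms $\ddc\omega$ and $d\omega\wedge d^c\omega$, which are controlled via the Ko{\l}odziej--Nguyen capacity estimates. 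Applying this with $\phi=u$ and $\psi=v_\delta$ yields
\[
\delta^m\int_{U_\varepsilon}\theta^m\wedge\omega^{n-m}\le C\varepsilon\int_{U_\varepsilon}\omega_u^m\wedge\omega^{n-m},
\]
where $U_\varepsilon$ is the sublevel set. To conclude that $U_\varepsilon$ is empty for small $\varepsilon$, we compare its Lebesgue volume to the Hessian mass of $u$ on it, using the volume-capacity inequality together with $L^\infty$ bounds on $u,v$. The nonempty interior of $U_\varepsilon$ gives a positive left-hand side, so this is a contradiction. Hence $u\ge v_\delta$ for all small $\delta$, and letting $\delta\to0$ (and then $s\to0$) gives $u\ge v$.

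The main obstacle is the comparison principle with error term in the Hermitian $m$-subharmonic setting. Since $\omega$ is not closed, integration by parts produces torsion terms, and making the error $C\varepsilon$ beat the gain $\delta^m\theta^m$ requires the quantitative capacity estimates of \cite{KN16,GuNguyen2018}. I would quote these results, or the domination principle proved in \cite{guedj2023degeneratecomplexhessianequations}, rather than reprove them.
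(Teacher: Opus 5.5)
Your strategy is the standard one: replace $v$ by $v_\delta=(1-\delta)v-s$, so that $\omega_{v_\delta}^m\wedge\omega^{n-m}\ge(1-\delta)^m\omega_v^m\wedge\omega^{n-m}+\delta^m\omega^n$, then apply the Ko{\l}odziej--Nguyen modified comparison principle on the thin sublevel set $U_\varepsilon=\{u<v_\delta+S+\varepsilon\}$. This matches what the paper relies on. The paper gives no argument of its own; it defers to \cite[Proposition 1.8]{fang2025continuitysolutionscomplexhessian}, which rests on the comparison principle \cite[Corollary 6.2]{kolodziej2023complexhessianmeasuresrespect}.

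\textbf{The gap is in your closing step.} The inequality $\delta^m\int_{U_\varepsilon}\omega^n\le C\varepsilon\int_{U_\varepsilon}\omega_u^m\wedge\omega^{n-m}$ is not a contradiction. Both sides are positive and both may tend to $0$ as $\varepsilon\to0$. Nothing bounds the Hessian mass of $u$ on $U_\varepsilon$ by the Lebesgue volume of $U_\varepsilon$, since $\omega_u^m\wedge\omega^{n-m}$ may be singular. The volume--capacity inequality cannot supply this. It bounds $\mathrm{Vol}(K)$ from above by a function of $\mathrm{Cap}_m(K)$, while $\int_K\omega_u^m\wedge\omega^{n-m}\lesssim\mathrm{Cap}_m(K)$ is also an upper bound by capacity. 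The two cannot be chained into $\int_K\omega_u^m\wedge\omega^{n-m}\lesssim\mathrm{Vol}(K)$.

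\textbf{How to repair it.} The problem arises when you replaced the factor $c/(1-\delta)^m<1$ by $1$ to write $\omega_u^m\wedge\omega^{n-m}\le\omega_{v_\delta}^m\wedge\omega^{n-m}-\delta^m\theta^m\wedge\omega^{n-m}$; keep the strict factor instead.
\begin{itemize}
\item For $\varepsilon<-S$ you have $U_\varepsilon\subset\{u<v_\delta\}\subset\{u<v\}$. Hence $\int_{U_\varepsilon}\omega_u^m\wedge\omega^{n-m}\le\frac{c}{(1-\delta)^m}\int_{U_\varepsilon}\omega_{v_\delta}^m\wedge\omega^{n-m}$.
\item The comparison principle then yields $\int_{U_\varepsilon}\omega_{v_\delta}^m\wedge\omega^{n-m}\le\frac{c(1+C_\delta\varepsilon)}{(1-\delta)^m}\int_{U_\varepsilon}\omega_{v_\delta}^m\wedge\omega^{n-m}$.
\item For $\varepsilon$ small the factor is $<1$, so this integral vanishes. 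By the lower bound $\omega_{v_\delta}^m\wedge\omega^{n-m}\ge\delta^m\omega^n$, the set $U_\varepsilon$ is Lebesgue-null.
\item The contradiction comes from $U_\varepsilon\neq\emptyset$ (by definition of $S$), together with the fact that $u\ge w$ almost everywhere for quasi-subharmonic $u,w$ forces $u\ge w$ everywhere.
\end{itemize}
Do not appeal to a nonempty interior: for merely bounded $u,v$ the set $U_\varepsilon$ need not be open.

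Finally, state the comparison principle in the form actually available, not for arbitrary bounded $\phi,\psi$ with error $1+C\varepsilon$. It applies to competitors of the form $(1-\delta)v$, with a constant depending on $\delta$ (blowing up as $\delta\to0$), and only for $\varepsilon$ small depending on $\delta$. Your $v_\delta$ with $\rho=0$ has this form, so fixing $\delta$ first and then sending $\varepsilon\to0$ is legitimate.
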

We refer the readers to \cite[Proposition 1.8]{fang2025continuitysolutionscomplexhessian} for the proof. The proof applied the comparison principle \cite[Corollary 6.2]{kolodziej2023complexhessianmeasuresrespect} of Hessian operators associated with bounded $(\omega,m)$-subharmonic functions. The case in which $u,v$ are continuous has been treated in \cite[Section 2]{guedj2023degeneratecomplexhessianequations}. As an direct application, we have the following Corollary.
\begin{coro}
     Let $u, v \in \SH_m(X, \omega) \cap L^{\infty}(X)$. If $e^{- v} \omega_v^m \wedge \omega^{n-m} \ge e^{- u} \omega_u^m \wedge \omega^{n-m}$, then $u \ge v$.
\end{coro}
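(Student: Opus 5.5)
We derive the corollary from the domination principle, Proposition~\ref{prop: DP}, by perturbing $v$ slightly so that a constant $c<1$ appears. Fix $\delta>0$ and set $v_\delta \coloneqq v-\delta$. Since $\ddc v_\delta = \ddc v$, we have $\omega_{v_\delta} = \omega_v$, and $v_\delta \in \SH_m(X,\omega)\cap L^\infty(X)$. The goal is to show $u \ge v_\delta$ for every $\delta>0$; letting $\delta\to 0$ then gives $u\ge v$.

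To verify the hypothesis of Proposition~\ref{prop: DP} for the pair $(u,v_\delta)$, rewrite the assumption as
\[
\omega_u^m\wedge\omega^{n-m} \le e^{u-v}\,\omega_v^m\wedge\omega^{n-m},
\]
an inequality of positive Radon measures. Since $u$ and $v$ are bounded, the density $e^{u-v}$ is a bounded Borel function, so this rearrangement is legitimate. Multiply both sides by the indicator $1_{\{u<v_\delta\}}$. On the set $\{u<v-\delta\}$ we have $e^{u-v}\le e^{-\delta}$. Hence
\[
1_{\{u<v_\delta\}}\,\omega_u^m\wedge\omega^{n-m} \le e^{-\delta}\,1_{\{u<v_\delta\}}\,\omega_{v_\delta}^m\wedge\omega^{n-m}.
\]
This is exactly the hypothesis of the domination principle with $c=e^{-\delta}\in[0,1)$, so $u\ge v_\delta = v-\delta$ on $X$.

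Two points need care.

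\emph{The measures are well defined.} For bounded $(\omega,m)$-subharmonic functions, the Hessian measures exist by \cite{kolodziej2023complexhessianmeasuresrespect}. They put no mass on pluripolar-type sets, so multiplying them by Borel indicator functions and by bounded Borel densities makes sense, and the inequality of measures is preserved under such multiplication.

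\emph{Passing to the limit.} The inequality $u\ge v-\delta$ holds everywhere on $X$ for each $\delta>0$, so letting $\delta\to 0$ gives $u\ge v$ everywhere.

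The only step that is not purely formal is passing from the given inequality of measures to the version restricted to the set $\{u<v_\delta\}$ with the density comparison $e^{u-v}\le e^{-\delta}$. This step is routine once the measures are known to be well defined.
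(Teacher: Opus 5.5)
Your argument is correct and follows the same route as the paper: rewrite the hypothesis as $\omega_u^m\wedge\omega^{n-m}\le e^{u-v}\,\omega_v^m\wedge\omega^{n-m}$ and invoke the domination principle, Proposition~\ref{prop: DP}. Your shift $v\mapsto v-\delta$ is in fact the careful version of the paper's one-line argument. The paper simply asserts $e^{u-v}\le c<1$ on $\{u<v\}$, and no such uniform $c$ exists in general because $u-v$ can approach $0$ on that set. On $\{u<v-\delta\}$ you obtain the genuine constant $c=e^{-\delta}$, and letting $\delta\to 0$ then gives $u\ge v$.
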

\begin{proof}
    On the set $\{u <v \}$, we have
    \[
    (\omega+ \ddc u)^m \wedge \omega^{n-m} \le e^{u-v } (\omega + \ddc v)^m \wedge \omega^{n-m } \le c (\omega + \ddc v)^m \wedge \omega^{n-m},
    \]
    where $0<c<1$. We deduce from the Domination principle (Proposition~\ref{prop: DP}) that $u\ge v$.
\end{proof}
Applying the last corollary twice, we obtain
\begin{coro} \label{coro:dp2}
     Let $u, v \in \SH_m(X, \omega) \cap L^{\infty}(X)$. If $e^{- v} \omega_v^m \wedge \omega^{n-m} = e^{- u} \omega_u^m \wedge \omega^{n-m}$, then $u = v$.
\end{coro}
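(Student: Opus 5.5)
The plan is to apply the preceding corollary twice, once with the roles of $u$ and $v$ swapped. The hypothesis is the equality of measures
\[
e^{-v}\omega_v^m\wedge\omega^{n-m}=e^{-u}\omega_u^m\wedge\omega^{n-m}.
\]
Since both $u$ and $v$ lie in $\SH_m(X,\omega)\cap L^\infty(X)$, these are well-defined positive Radon measures by \cite{kolodziej2023complexhessianmeasuresrespect}. Multiplying by the bounded continuous-in-value weights $e^{\pm u}$, $e^{\pm v}$ is legitimate, so the corollary's hypothesis makes sense.

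First, the equality trivially implies the inequality $e^{-v}\omega_v^m\wedge\omega^{n-m}\ge e^{-u}\omega_u^m\wedge\omega^{n-m}$. The previous corollary therefore yields $u\ge v$.

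Second, the equality also gives $e^{-u}\omega_u^m\wedge\omega^{n-m}\ge e^{-v}\omega_v^m\wedge\omega^{n-m}$. Applying the same corollary with $u$ and $v$ interchanged yields $v\ge u$. Combining the two inequalities gives $u=v$ everywhere on $X$.

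The argument is essentially formal, and all the difficulty is hidden in the previous corollary, i.e. in the Domination principle (Proposition~\ref{prop: DP}). The only point deserving a word is one step inside that corollary. On $\{u<v\}$ one wants $\omega_u^m\wedge\omega^{n-m}\le c\,\omega_v^m\wedge\omega^{n-m}$ with a uniform $c<1$, whereas $e^{u-v}<1$ pointwise does not by itself give a uniform constant. If needed, I would address this by replacing $v$ with $v-\delta$ for small $\delta>0$. On $\{u<v-\delta\}$ one has $e^{u-(v-\delta)}\cdot e^{-\delta}$, which gives $c=e^{-\delta}$. This yields $u\ge v-\delta$, and letting $\delta\to0$ gives $u\ge v$; the symmetric argument handles the other inequality. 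This reduction is the only step where I expect any care to be required.
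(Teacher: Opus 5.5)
Your proposal is correct and is exactly the paper's argument: the corollary follows by applying the preceding corollary twice, once with $u$ and $v$ interchanged. Your $v-\delta$ device also fixes a gap in the paper's proof of that preceding corollary, which asserts $e^{u-v}\le c<1$ on $\{u<v\}$ with a uniform $c$ without justification; on $\{u<v-\delta\}$ one gets $c=e^{-\delta}$, and letting $\delta\to 0$ recovers $u\ge v$.
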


\section{Estimates} \label{sec:est}
In this section, we establish an a priori $C^2$-estimate for smooth solutions to the Hessian equation 
\[
(\omega + \ddc u)^m \wedge \omega^{n-m} = e^{u + h (x,v)} \omega^n.
\]
The $C^2$-estimate is independent of $\Delta v$ and $|\nabla v |^2$. Let us precise our second order estimate:
\begin{thm} \label{thm:c2estmate}
Assume that $v \in C^{\infty}(X) \cap \SH_m(X, \omega) $, with $-A_0 \le v \le A_0$, and $h: X \times [-A_0 , A_0 ]\rightarrow \R $ is smooth.  Assume that $u \in C^{\infty}(X) \cap \SH_m(X, \omega)$ is a solution to the equation
\[
(\omega + \ddc u)^m \wedge \omega^{n-m} = e^{u + h (x,v)} \omega^n.
\]
Then, for some fixed $\varepsilon_0>0$, we have 
\[
\sup_X \Delta_{\omega} (u) \le C(1 + \sup_X |\nabla u|^2 ),
\]
where $C$ is a constant depending on $A_0, B_0, X ,n, \omega, \tilde{\varphi}$, and $C^2$-norm of $h(x,t)$. Here $B_0$ is an upper bound for the $C^2$-norm of some smooth function $\tilde{\varphi}$ such that $|\tilde{\varphi}-v|<\epsilon_0$.
\end{thm}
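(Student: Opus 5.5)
We prove the estimate with a maximum-principle argument in the spirit of Hou--Ma--Wu \cite{HMW10} and Sz\'ekelyhidi \cite{Sze18}, modified as in \cite{BLZ26}. First a zeroth order bound. Applying the maximum principle directly to $u$ gives $e^{u+h}\le C$ at a maximum point and $\ge c$ at a minimum point, because $\omega_u\le \omega$ there. Hence $|u|\le C(A_0,h)$, so the right hand side $e^{u+h(x,v)}$ is bounded above and below. Write $F(A)=\log S_m(\lambda(A))$ for $A=g^{-1}\omega_u$. The equation becomes $F = u + h(x,v)$, and the linearized operator is $L=F^{i\bar j}\nabla_i\nabla_{\bar j}$.

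The test function at a point where the largest eigenvalue $\lambda_1$ of $\omega_u$ is maximal (after the usual perturbation that makes $\lambda_1$ smooth) is
\[
G=\log \lambda_1+\Phi(|\nabla u|^2)+\psi(u)+\chi(v-\tilde\varphi).
\]
Here $\Phi(s)=-\tfrac12\log(1-s/2K)$ with $K=1+\sup|\nabla u|^2$, and $\psi$ is Sz\'ekelyhidi's function of $u$. The new ingredient is $\chi(t)$ on $[-\epsilon_0,\epsilon_0]$, chosen with $\chi'\ge B$ large and $\chi''\ge \kappa\chi'^2$, for instance $\chi(t)=\tfrac1\kappa e^{Bt}$-type with $\epsilon_0 B$ small. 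Differentiating the equation once and twice gives
\[
F^{p\bar p}u_{1\bar 1 p\bar p}\ \ge\ -F^{p\bar q,r\bar s}u_{p\bar q1}u_{r\bar s\bar 1}+ (h)_{1\bar1}+\dots .
\]
The term $(h)_{1\bar1}$ contains $h_t v_{1\bar1}+h_{tt}|v_1|^2$. These are exactly the terms that must not enter the constant. The term $L\chi=\chi' F^{p\bar p}(v-\tilde\varphi)_{p\bar p}+\chi''F^{p\bar p}|(v-\tilde\varphi)_p|^2$ is there to absorb them. Since $\omega_v\in\overline{\Gamma_m(\omega)}$, Proposition~\ref{prop:espforms} (and the remark after it), applied with $\alpha=\omega_u$ and $\gamma=\omega_v$, gives
\[
F^{p\bar p}v_{p\bar p}\ \ge\ c\,\frac{\sigma_1(\omega_v)}{\sigma_1(\omega_u)}-C\sum_p F^{p\bar p}.
\]
So $\chi' F^{p\bar p}v_{p\bar p}$ dominates $|h_t|\,v_{1\bar1}/\lambda_1$, because $v_{1\bar1}\le(n-1)\sigma_1(\omega_v)+C$ by \eqref{frml:valerabsolupse} and $\lambda_1\sim\sigma_1(\omega_u)$ up to constants. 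The terms involving $\tilde\varphi$ are controlled by $B_0\chi'\sum F^{p\bar p}$.

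Next we exploit the critical point condition $G_p=0$. The third order terms are handled as in \cite{Sze18}: concavity of $F$ together with the term $-F^{p\bar p}|u_{1\bar1p}|^2/\lambda_1^2$, split into the cases where $\lambda_p$ is or is not comparable to $\lambda_1$. Using $G_p=0$ to rewrite $\Phi'\,\partial_p|\nabla u|^2$ or $\partial_p\lambda_1/\lambda_1$ produces a term of the form $-C\chi'^2F^{p\bar p}|(v-\tilde\varphi)_p|^2$. This is the step I expect to be the main obstacle. We use part of $\chi''F^{p\bar p}|(v-\tilde\varphi)_p|^2$, with $\chi''\ge 2C\chi'^2$, to cancel it. This is exactly why we compose $\chi$ with $v-\tilde\varphi$, whose range is of size $\epsilon_0$: on a short interval one can make $\chi''/\chi'^2$ as large as needed while $\chi'$ remains large and $\chi$ bounded. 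Remaining gradient terms $h_{tt}|v_1|^2/\lambda_1$ are likewise absorbed into the $\chi''$ term, using $F^{1\bar1}\ge c\,S_{m-1}(\lambda;1)/S_m$ and the positivity $F^{p\bar p}\ge c\sum_q F^{q\bar q}/\lambda_1$-type bounds where available. Where such bounds fail, we use the good term $\Phi''F^{p\bar p}|\partial_p|\nabla u|^2|^2$ together with $\psi$.

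Finally we collect the good terms. From $\Phi'$ we get $\Phi'F^{p\bar p}(|u_{pp}|^2+\lambda_p^2)\ge cK^{-1}F^{p\bar p}\lambda_p^2$. From $\psi$ we use Sz\'ekelyhidi's dichotomy: either $\psi' L(u)$ gives $\ge \tau\sum F^{p\bar p}$, since a subsolution-type condition holds with $\underline u=0$ as $\omega\in\Gamma_m$, or $F^{1\bar1}\ge c\sum F^{p\bar p}$. In either case, combined with $\sum_p F^{p\bar p}\ge c>0$ (Maclaurin), the inequality $0\ge LG$ at the maximum point yields $\lambda_1\le CK$. Since $|\lambda_j|\le(n-1)\sigma_1$ and $\Delta u\le n\lambda_1+C$, this gives $\sup_X\Delta_\omega u\le C(1+\sup|\nabla u|^2)$. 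The constants depend only on $A_0,B_0,\epsilon_0,\omega$ and $\|h\|_{C^2}$, because every occurrence of $\nabla v$ and $\Delta v$ has been absorbed by the $\chi$ terms.
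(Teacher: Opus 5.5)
Your strategy is the paper's: the same test function $\log\lambda_1+\Phi(|\nabla u|^2)+\psi(u)+\chi(v-\tilde\varphi)$, and Proposition~\ref{prop:espforms} with $\alpha=\omega_u$, $\gamma=\omega_v$ to absorb the $h_t v_{1\bar 1}/\lambda_1$ term through $\chi' F^{p\bar p}(1+v_{p\bar p})$. You also let $\chi$ live on an interval of length $\sim\epsilon_0$, so that $\chi''\ge\kappa\chi'^2$ with $\chi'$ large; this kills the $-C\chi'^2F^{p\bar p}|(v-\tilde\varphi)_p|^2$ coming from $G_p=0$. The paper takes $\chi(t)=-\frac1{12}\log(2\epsilon-t)$, so $\chi''=12\chi'^2$. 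Your exponential $\chi$ and the choice $F=\log S_m$ instead of $S_m^{1/m}$ are cosmetic differences.

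The step that does not work as written is the absorption of the first-order $v$-terms. First, you miss one. Inserting the once-differentiated equation into $L(|\nabla u|^2)$ produces $2\Phi'\,\mathrm{Re}\sum_k h_t v_k u_{\bar k}$. This term is of size $CK^{-1/2}|\nabla v|$ and carries \emph{no} factor $\lambda_1^{-1}$. Second, the mechanism you sketch is not viable: bounds of type $F^{p\bar p}\ge c\sum_q F^{q\bar q}/\lambda_1$ ``where available'', and otherwise the $\Phi''$ term together with $\psi$. That bound fails exactly at $p=1$. For $m=2$, $n=3$, $\lambda=(N,\eta,\eta)$ with $S_2(\lambda)=1$, one has $F^{1\bar 1}\approx N^{-1}$ while $\sum_q F^{q\bar q}/\lambda_1\approx 2$. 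Moreover, neither $\Phi''$ nor $\psi$ carries any positive information on $\nabla v$.

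What is needed, and always holds, is the weaker inequality $\sum_p F^{p\bar p}|v_p|^2\ge \frac{c}{\lambda_1}|\nabla v|^2$. The paper gets it from the remark after Proposition~\ref{prop:espforms}, applied to $\mu_p=|v_p|^2$, together with $S_1\le n\lambda_1$. With your normalization you can instead use $F^{p\bar p}\ge F^{1\bar 1}=S_{m-1}(\lambda;1)/S_m\ge \frac{m}{n\lambda_1}$. The last step is the standard inequality $\lambda_1S_{m-1}(\lambda;1)\ge\frac mn S_m(\lambda)$, not the tautology you wrote. This disposes of $h_{tt}|v_1|^2/\lambda_1$ and $h_{1t}v_{\bar 1}/\lambda_1$.

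For the cross term, split as the paper does: $CK^{-1/2}|\nabla v|\le \frac{A_1}{\lambda_1}|\nabla v|^2+\frac{C^2\lambda_1}{4A_1K}$.
\begin{itemize}
\item The first piece is absorbed by the leftover $\chi''F^{p\bar p}|(v-\tilde\varphi)_p|^2$, so the threshold on $\chi'$ must also depend on $A_1$.
\item The second piece is absorbed, once $A_1$ is large, by the good term $\frac{c}{K}F^{p\bar p}\lambda_p^2\ge\frac cK F^{1\bar 1}\lambda_1^2\ge c'\frac{\lambda_1}{K}$.
\end{itemize}
Since $A_1$ depends only on $n,m$, $\|h\|_{C^2}$ and $\sup|u|$, the threshold for $\chi'$, and hence $\epsilon_0$, stays independent of $B_0$. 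So the order of choices $\epsilon_0\to\tilde\varphi\to B_0\to\psi$ is not circular.

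A minor slip: at a minimum point of $u$ one has $\omega_u\ge\omega$, not $\le$.
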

In the proof of the regularity result, we approximate a given bounded (hence continuous) solution $\varphi$ by smooth $(\omega,m)$-subharmonic functions $v_j$. We will also fix a smooth function $\tilde{\varphi}$ such that $|\tilde{\varphi}-\varphi|<\epsilon_0$. Since $\varphi$ is continuous we also have $|\tilde{\varphi}-v_j|<2\epsilon_0$ independent of $j$. In this way, our $C^2$ estimate does not depend on $j$. 

We follow the proof of Proposition 13 in the work of Sz\'ekelyhidi \cite{Sze18}, which generalized the $\mathcal{C}^2$-estimate of Hou--Ma--Wu \cite{HMW10}. We need some modifications since our equation now is different.
Let us stress that our $\mathcal{C}^2$-estimate should not depend on $|\nabla v |^2$ and $\Delta v$. This is the main difficulty in our argument and the principal difference from the proof in Sz\'ekelyhidi's work \cite{Sze18}. In the proof below, we only present the necessary modifications for our setting and highlight some of the key steps.
\begin{proof}
The uniform estimate for solution $u$ is a direct result from the maximum principle (see \cite[Theorem 3.17]{Ko_odziej_2016}).
To simplify the notation, we use the following convention. We denote
\[
C_1 = \sup_X|u|, 
\]
and we fix $C_2$ such that 
\[
C_2 \ge  \| h\|_{C^2( X \times [-A_0, A_0])}, \; C_2 \ge  \| h\|_{C^1( X \times [-A_0, A_0])} , C_2 \ge \sup_{x\in X, t \in [-A_0, A_0]} \| h(x,t)\|,
\]
and we fix $C_3$ and $C_4$ such that
\[
 C_3\le\frac{1}{m}e^{\frac{u + h (x,v)}{m} }  \le C_4.
\]
We denote by $C_0$, $C_0'$ uniform constants that might be different each line.
Fix $\epsilon >0$ to be a small constant that we will choose later. We take a smooth function $\tilde{\varphi}$, such that
\[
|v - \tilde{\varphi } | \le \epsilon.
\]
We aim to apply the maximum principle to the function
\[
G(z) =  \log (\lambda_1) + \Phi(| \nabla u |^2 ) + \Psi(u) + \chi(v-\tilde{\varphi}).
\]
The function $\Phi$ and $\Psi$ are same as the functions used in  \cite{Sze18}:
\[
\Phi(t) = - \frac{1}{2} \log\left(1 -\frac{t}{2K} \right), \; \Psi(t ) = - 2At + \frac{A \tau}{2} t^2,
\]
where $A$ and $\tau$ are uniform constant that we will precise later, and $K = \sup_X|\nabla u|^2 +1$. The function $\chi$ is defined as 
\[
\chi(t ) = - \frac{1}{12}\log \left(  2\epsilon  -t  \right).
\]
Therefore, $\frac{1}{36\epsilon} \le \chi' \le \frac{1}{12\epsilon}$, and $\chi''= 12 (\chi' )^2$.
As in \cite[Session 4]{Sze18}, we slightly modify $W^{ij}=g^{i \bar{k}} \tilde{g}_{k \bar{j} }$.
We consider the quantity 
\begin{equation}
    \widetilde{G}(z) = \log (\widetilde{\lambda_1}) + \Phi(| \nabla u |^2 ) + \Psi(u) + \chi(v -\tilde{\varphi}),
\end{equation}
where $\widetilde{\lambda_1}$ denotes the largest eigenvalue of $\widetilde{W}= W-B$, with $W^{ij}=g^{i \bar{k}} \tilde{g}_{k \bar{j} } $ and $B$ defined as in \cite[(69)]{Sze18}. 
Assume that $G$ attains its maximum at $x_0$. We claim that $\tilde{G}$ attains its maximum at $x_0$ in small neighborhood of $x_0$. Since $\widetilde{W}= W-B$ and $B$ is a diagonal matrix such that $0 = B^{11} <B^{22} < \cdots <B^{nn}$, we get that that $\widetilde{G}(z) \le G(z)$ in the neighborhood of $x_0$ and thus, $\widetilde{\lambda_1}(W(z)-B) \le \lambda_1(W(z))$. It follows that $\widetilde{G}(z) \le G(z)\le G(x_0) = \widetilde{G}(x_0)$. We note that the difference between the quantity we consider and that in \cite{Sze18} is that we have added an extra function of $v-\tilde{\varphi}$. This term is introduced to compensate for the $| v_p |^2$ and $v_{p \bar{p} }$ terms arising when we differentiate our Hessian equation.

We then have
\[
\begin{aligned}
    G_p =& \frac{\tilde{g}_{1\bar{1}p } }{\lambda_{1 }} + \Phi'(  u_{kp} u_{\bar{k} } + u_{k} u_{\bar{k} p} ) + \Psi ' u_p +w_p +\chi' v_p - \chi' \tilde{\varphi}_p ,\\
    G_{p \bar{p}} =& (\log (\lambda_1))_{p \bar{p} } 
+\Phi^{\prime \prime}\left| u_{k p} u_{\bar{k}}+ u_p u_{\bar{p} p}\right| ^2 
+\Phi^{\prime}\left(  |u_{p j}|^2  + |u_{\bar{p} p}|^2 + 2 \Re (u_{k \bar{p} \bar{p}} u_{\bar{k}})\right)\\
+&  \Psi^{\prime \prime} |u_p|^2 +\Psi^{\prime} u_{p \bar{p}} +\chi''|v_p -\tilde{\varphi}_p|^2 + \chi'(v_{p \bar{p} } - \tilde{\varphi}_{p\bar{p} }),
\end{aligned}
\]
where $w_p = \frac{-(B^{11})_p}{\lambda_1}$. We briefly recall the notations in \cite{Sze18} for the convenience of the reader. We denote 
\[
F^{pq } = \frac{\partial F }{ \partial w^{pq} }, F^{pq ,rs} = \frac{\partial^2 F }{\partial w^{pq}\partial w^{rs} },
\]
where $w^{pq}$ is the $pq$-entry of the matrix $W=g^{p \bar{k} } \tilde{g}_{k \bar{q} } $. We write $F(W) = S_m(\lambda_1, \cdots, \lambda_n )$. It then follows from direct computations that
\begin{equation}
    F^{pp}= \frac{1}{m} S_m(\lambda)^{\frac{1}{m} -1 } S_{m-1 } (\lambda ;p),
\end{equation}
and 
\begin{equation}
\begin{aligned}
    F^{pq,rs } =\left( \frac{1}{m} S_m^{\frac{1}{m} -1} S_m(\lambda;pr) + \frac{1}{m} \left(\frac{1}{m} -1 \right) S_{ m}^{\frac{1}{m}-2 }(\lambda) S_{m-1}(\lambda;p) S_{m-1}(\lambda;r) \right) \delta_{pq} \delta_{rs} \\
    + \frac{F^{pp} -F^{qq}}{\lambda_p -\lambda_q } (1-\delta_{pq}) \delta_{ps} \delta_{qr}.
\end{aligned}
\end{equation}
We denote by $\mathcal{F}$ the sum of $F^{pp}$ in the index $p$. We note that $\mathcal{F}$ has lower bound:
\begin{equation}
\begin{aligned}
\mathcal{F} = \frac{1}{m} S_m(\lambda)^{\frac{1}{m} -1 } \sum_p S_{m-1} (\lambda;p) = \frac{n-m+1}{m} S_m(\lambda)^{\frac{1}{m} -1 } S_{m-1}(\lambda)\\ \ge  \frac{n-m+1}{m} S_m(\lambda)^{\frac{1}{m} -1 } S_{m}(\lambda)^{\frac{m-1}{m}} = \frac{n-m+1}{m}.
\end{aligned}
\end{equation}
We differentiate the equation
\[
F\left( g^{p \bar{k} } \tilde{g}_{k \bar{q} }  \right) = e^{\frac{u + h(x,v)}{m} }.
\]
We then obtain
\begin{equation} \label{frml:Ffirst}
   F^{pp } \tilde{g}_{p \bar{p} l }   =  \frac{1}{m}e^{\frac{u + h(x,v)}{m} } (u_{l} + h_l(x,v) + h_v(x,v) v_l ) , 
\end{equation}
and 
\begin{equation}
\begin{aligned} \label{frml:Fsecond}
F^{p q ,r s } \tilde{g}_{p \bar{q} l } \tilde{g}_{r \bar{s} \bar{l} }  + F^{ pp }\tilde{g}_{p \bar{p } l \bar{l} }  
=\frac{1}{m^2} e^{\frac{u + h(x,v)}{m} } |u_l  + h_l(x,v) + h_v(x,v)v_l |^2 \\
+ \frac{1}{m}e^{\frac{u + h(x,v)}{m} } (u_{l \bar{l} } + h_{l \bar{l} }(x,v)+ h_{lv }(x,v)v_{\bar{l}}+ h_{v\bar{l} }(x,v)v_l  +h_{vv}(x,v)|v_l |^2 + h_v(x,v)v_{l \bar{l} } ).
\end{aligned}
\end{equation}
It follows that
\[
\begin{aligned}
F^{pp}\tilde{g}_{p \bar{p } 1 \bar{1} }  \ge C_2\lambda_1+ C_4(-1 -C_2 -3C_2|v_1|^2-C_2 |v_{1\bar{1}}| ) - F^{p q ,r s }  \tilde{g}_{p \bar{q} \bar{1}} \tilde{g}_{r \bar{s} 1 }  ,
\end{aligned}
\]
and we can absorb the constant $C_4$ in $C_0 \mathcal{F}$.
Combining this with (78) and (83) in \cite{Sze18}, we obtain the estimate for $L(\log{\tilde{\lambda}_1 } )$:
\[
\begin{aligned}
L(\log{\tilde{\lambda}_1 } ) \ge \frac{C_4}{\lambda_1}(-3C_2|v_1|^2-C_2 |1+v_{1\bar{1}}| )\\
- \frac{F^{pq,rs}\tilde{g}_{p \bar{q}1 } \tilde{g}_{r\bar{s} \bar{1} } }{\lambda_1}- \frac{F^{pp }|\tilde{g}_{p\bar{1}1 } |^2}{\lambda_1^2}- C_0(\mathcal{F} + \lambda_1^{-2}|F^{pp} \tilde{g}_{1\bar{1}k }| ).
\end{aligned}
\]
Using \eqref{frml:Ffirst} and commuting derivatives as in \cite{Sze18}, we obtain
\[
\begin{aligned}
\Phi' F^{pp } u_{k p \bar{p} } u_{\bar{k} } = \Phi' F^{pp } (u_{p \bar{p}k } - T^{ p}_{pk} \lambda_p + T^q_{pk} g_{q \bar{p} } + R^q_{p\bar{p}k } u_q ) u_{\bar{k} } \\
= \Phi' \frac{1}{m}e^{\frac{u + h(x,v)}{m} } (u_{k} + h_k(x,v) + h_v(x,v) v_k ) u_{\bar{k} } - \Phi' F^{pp } g_{p \bar{p} k } u_{\bar{k} } \\- \Phi'F^{pp}T^{ p}_{pk} \lambda_p u_{\bar{k} } +\Phi'F^{pp} T^p_{pk}u_{\bar{k} }  +\Phi'F^{pp} R^q_{p\bar{p}k } u_q u_{\bar{k} },
\end{aligned}
\]
where the torsion terms can be estimated as (97), (98) in \cite{Sze18}, and so does its conjugate. We thus get
\[
\begin{aligned}
  &2\Phi' \sum_k \Re (F^{pp } u_{k p \bar{p} } u_{\bar{k} } ) \\
  = &\Phi^{\prime} 2\Re\sum_{k=1}^n\left(\frac{1}{m}e^{\frac{u + h(x,v)}{m} } (u_k  + h_k(x,v)+ h_v(x,v)v_k )\right) u_{\bar{k}} -C_0 \mathcal{F} - \epsilon_1 K^{-1}F^{pp } \lambda_p^2 - C_{\epsilon_1 } \mathcal{F}  \\
  =& \Phi' \frac{2}{m} e^{\frac{u + h(x,v)}{m} } |\nabla u|^2 + \Phi'2 \sum_k\Re \left(\frac{1}{m} e^{\frac{u + h(x,v)}{m} } (h_k(x,v) + h_v(x,v)v_k) u_{\bar{k} } \right)  -C_0 \mathcal{F} - \epsilon_1 K^{-1}F^{pp } \lambda_p^2 - C_{\epsilon_1 } \mathcal{F} \\
 \ge& -\frac{1}{K} \sum_k \left|\frac{1}{m} e^{\frac{u + h(x,v)}{m} } h_k(x,v)\right|K^{1/2 }- \frac{1}{K} \sum_k \left|\frac{1}{m} e^{\frac{u + h(x,v)}{m} } h_v(x,v) v_k u_{\bar{k} } \right| -C_0 \mathcal{F} - \epsilon_1 K^{-1}F^{pp } \lambda_p^2 - C_{\epsilon_1 } \mathcal{F}\\
 \ge & - \frac{1}{2K} \sum_k \frac{A_1 K|v_k|^2}{\Tr_{\omega}(\omega_{u})}-  \sum_k \frac{1}{2K^2A_1} \left| C_2 C_4 u_{\bar{k}} \right|^2 \Tr_{\omega}(\omega_{u}) -C_0 \mathcal{F} - \epsilon_1 K^{-1}F^{pp } \lambda_p^2 - C_{\epsilon_1 } \mathcal{F},
\end{aligned}
\]
where $A_1$ is a positive uniform constant to be precised later. We remark that the last inequality follows from Cauchy--Schwarz inequality and we absorbed the $-\frac{1}{K} \left|\frac{1}{m} e^{\frac{u + h(x,v)}{m} } h_k(x,v)\right|K^{1/2 }$ term into $C_0 \mathcal{F}$. 
We use half of $F^{pp}\lambda_p^2$ term to absorb the $|u_{p\bar{p} } |^2$ and $|u_{kp}|^2$ term as (100) in \cite{Sze18}. It follows that
\begin{equation}
\begin{aligned}
    0 \ge& F^{pp} G_{p\bar{p} } \ge \frac{C_4}{\lambda_1}( -3C_2|v_1|^2-C_2 |1+v_{1\bar{1}}| )- \frac{F^{pq,rs}\tilde{g}_{p \bar{q}1 } \tilde{g}_{r\bar{s} \bar{1} } }{\lambda_1}- \frac{F^{pp }|\tilde{g}_{p\bar{1}1 } |^2}{\lambda_1^2} \\
+&\Phi^{\prime \prime}\sum_k F^{pp}\left| u_{k p} u_{\bar{p}}+ u_p u_{\bar{p} p}\right| ^2 + \frac{1}{32K}F^{pp} \lambda_p^2 +\frac{1}{16K} F^{pp}\sum_k\left(  |u_{k p}|^2  + |u_{\bar{p} p}|^2 \right)\\
+&  \Psi^{\prime \prime} F^{pp}|u_p|^2 +\Psi^{\prime}F^{pp} u_{p \bar{p}} +\chi''F^{pp}|v_p - \tilde{\varphi}_p|^2 + \chi'F^{pp}(v_{p \bar{p} }- \tilde{\varphi}_{p\bar{p} } ) \\
 - &\frac{1}{2} \sum_k  \frac{A_1 |v_k|^2}{\Tr_{\omega}(\omega_{u})}-  \sum_k \frac{1}{2K^2A_1} \left| C_2 C_4 u_{\bar{k}} \right|^2 \Tr_{\omega}(\omega_{u}) -C_0 \mathcal{F} - C_0 \lambda_1^{-2}|F^{pp} \tilde{g}_{1\bar{1}k }| .  
\end{aligned}
\end{equation}
The identity $G_p =0$ implies that
\begin{equation} \label{eqn:Gp=0}
\frac{\tilde{g}_{1\bar{1} p} }{\lambda_1} = - \Phi'(  u_{kp} u_{\bar{k} } +  u_{p} u_{\bar{p} p} ) - \Psi ' u_p - w_p- \chi' (v_p- \tilde{\varphi}_p) .
\end{equation}
It follows that
\[
\begin{aligned}
\frac{ F^{pp}|\tilde{g}_{1\bar{1} p} |}{\lambda_1^2} \le \frac{1}{2K\lambda_1} F^{pp} (|u_{kp}| + |u_{k\bar{p} }| )K^{1/2} + C_0 \mathcal{F} + \frac{2A}{\lambda_1} F^{pp } |u_p | + \chi'\lambda_1^{-1}|v_p-\tilde{\varphi}_p |F^{p p } \\
\le \frac{1}{2K\lambda_1} F^{pp} (|u_{kp}| + |u_{k\bar{p} }| )K^{1/2} + C_0 \mathcal{F} + \frac{2A}{\lambda_1} F^{pp } |u_p | + \chi'\lambda_1^{-1}F^{pp }|v_p| + \chi' B_0\lambda_1^{-1}\mathcal{F}.
\end{aligned}
\]
As in \cite{Sze18}, we use the $|u_{kp }|^2 + |u_{p\bar{p} } |^2$ terms to absorb the negative $|u_{kp}| + |u_{k\bar{p} }|$ terms above, and we finally get that
\begin{equation} \label{eqn:FppGpp2}
    \begin{aligned}
    0 \ge& F^{pp} G_{p\bar{p} } \ge \frac{C_4}{\lambda_1}( -3C_2|v_1|^2-C_2 |1+v_{1\bar{1}} |)- \frac{F^{pq,rs}\tilde{g}_{p \bar{q}1 } \tilde{g}_{r\bar{s} \bar{1} } }{\lambda_1}- \frac{F^{pp }|\tilde{g}_{p\bar{1}1 } |^2}{\lambda_1^2} \\
+&\Phi^{\prime \prime}F^{pp}\left| u_{k p} u_{\bar{k}}+ u_p u_{\bar{p} p}\right| ^2 + \frac{1}{32K}F^{pp} (\lambda_p^2 + |u_{pk } |^2) \\
+& \Psi^{\prime \prime} F^{pp}|u_p|^2 +\Psi^{\prime}F^{pp} u_{p \bar{p}} +\chi''F^{pp}|v_p -\tilde{\varphi}_p |^2 + \chi'F^{pp}(v_{p \bar{p} } - \tilde{\varphi}_{p\bar{p} } ) \\
 - &\frac{1}{2} \sum_k A_1 \frac{|v_k|^2}{\Tr_{\omega}(\omega_{u})}-  \sum_k \frac{1}{2A_1 K^2} \left| C_2 C_4 u_{\bar{k} } \right|^2 \Tr_{\omega}(\omega_{u})\\
 -& C_0\left( \mathcal{F} +\chi' \lambda_1^{-1 } \mathcal{F}  +\frac{2A}{\lambda_1} F^{p\bar{p} } |u_p | + \frac{\chi'}{\lambda_1}F^{p p }|v_p |\right).  
\end{aligned}
\end{equation}
We deal with two cases separately.

\textbf{Case 1:}  $\lambda_n < -\delta \lambda_1$.
To deal with the third term in \eqref{eqn:FppGpp2}, we use the identity $G_p =0$:
\[
\begin{aligned}
&-\frac{F^{p\bar{p} } |\tilde{g}_{1\bar{1} p  }|^2 }{(\lambda_1)^2} = -F^{p\bar{p} }\left|\Phi^{\prime}\left(\sum_k u_{k p} u_{\bar{k}}+u_p u_{\bar{p} p}\right) + \Psi^{\prime} u_p + \chi'(v_p-\tilde{\varphi}_p )+w_p \right|^2 \\
 \ge& -2 (\Phi' )^2 F^{p\bar{p} } \left| \sum_k u_{k p} u_{\bar{k}}+u_p u_{\bar{p} p} \right|^2 -6 (2A)^2 F^{p\bar{p} } |u_p|^2 - 6 (\chi')^2 F^{p\bar{p} } |v_p-\tilde{\varphi}_p |^2 -C_0\mathcal{F} \\
 \ge& -2 (\Phi' )^2 F^{p\bar{p} } \left| \sum_k u_{k p} u_{\bar{k}}+u_p u_{\bar{p} p} \right|^2 -6 (2A)^2 F^{p\bar{p} } |u_p|^2 - 6 (\chi')^2 F^{p\bar{p} } \left(\frac{3}{2}|v_p|^2  + 3|\tilde{\varphi}_p |^2\right) -C_0\mathcal{F} .
\end{aligned}
\]
Commutating the derivatives as (106) in \cite{Sze18} and using \eqref{eqn:Gp=0} to deal with the $|\tilde{g}_{1\bar{1}p } |$ term, we get that
\[
\begin{aligned}
&\frac{F^{pp }|\tilde{g}_{p\bar{1} 1 } |^2}{\lambda_1^2}\le \frac{F^{pp }|\tilde{g}_{1\bar{1} p } |^2}{\lambda_1^2} + C_0F^{pp }(1 + \lambda_1^{-1} |\tilde{g}_{1\bar{1} p } |) \\
\le& \frac{F^{pp }|\tilde{g}_{1\bar{1} p } |^2}{\lambda_1^2}+ C_0 \mathcal{F} + \frac{C_0}{2 K}F^{pp} (|u_{kp}| + |u_{p \bar{p}}| )K^{1/2} + 2A C_0 F^{pp} |u_p| + \chi' C_0 |v_p - \tilde{\varphi}_p| F^{pp}.
\end{aligned}
\]
We also have 
\[
2A C_0 F^{pp} |u_p| \le A^2 F^{pp} |u_p|^2 + C_0^2 \mathcal{F}
\]
and
\[
\chi' C_0 |v_p -\tilde{\varphi}_p | F^{pp} \le F ^{pp} \chi'|v_p -\tilde{\varphi}_p|^2 + C_0^2 \chi' \mathcal{F} \le F^{pp} 2\chi' |v_p |^2 + (C_0^2+ 2 B_0^2 )\chi' \mathcal{F}  .
\]
To deal with the last term in \eqref{eqn:FppGpp2}, we assume that $\lambda_1 \ge C_0$. Hence, 
\[
-C_0\left( \frac{2A}{\lambda_1} F^{p\bar{p} } |u_p | + \frac{\chi'}{\lambda_1}F^{p p }|v_p |\right) \ge -2A F^{p\bar{p} } |u_p | - \chi' F^{p p }|v_p | \ge -A^2 \mathcal{F}K -\mathcal{F}- \chi' F^{p p }|v_p |.
\]
We therefore obtain
\begin{equation} \label{eqn:FppGpp3}
    \begin{aligned}
    0 \ge& F^{pp} G_{p\bar{p} } \ge \frac{C_4}{\lambda_1}(-3C_2|v_1|^2-C_2 | 1+v_{1\bar{1}} |)- \frac{F^{pq,rs}\tilde{g}_{p \bar{q}1 } \tilde{g}_{r\bar{s} \bar{1} } }{\lambda_1} \\
+&  \frac{1}{32K}F^{pp} \lambda_p^2 -26A^2 \mathcal{F}K  + \Psi^{\prime \prime} F^{pp}|u_p|^2 +\Psi^{\prime}F^{pp} u_{p \bar{p}}\\
+&(\chi''-  9(\chi')^2 - 2 \chi')F^{pp}|v_p|^2 + \chi'F^{pp}v_{p \bar{p} } - \chi'F^{pp}|v_p| \\
 - &\frac{1}{2} \sum_k A_1 \frac{|v_k|^2}{\Tr_{\omega}(\omega_{u})}-  \sum_k \frac{1}{2A_1K^2} \left|C_4 C_2 u_{\bar{k}} \right|^2 \Tr_{\omega}(\omega_{u}) -C_0(\mathcal{F} + \chi'\mathcal{F} ).  
\end{aligned}
\end{equation}
We now turn to deal with the $|v_p|^2$ and $v_{p \bar{p} }$ terms. We first note that $v $ is an $(\omega,m )$-subharmonic function. We deduce that
\begin{equation} \label{eqn: negativev_pp}
\begin{aligned}
-\frac{C_2 C_4 }{\lambda_1}|1+ v_{1 \bar{1} }| \ge - \frac{n}{S_1 } C_2C_4 |1+v_{1 \bar{1} }|
\ge - \frac{n(n-1)}{S_1 }  C_2C_4 \left(n+\sum_p v_{p \bar{p} }\right) ,
\end{aligned}
\end{equation}
where we used the fundamental inequality $n \lambda_1 \ge S_1 $ and $ |1 + v_{1 \bar{1} }| \le (n-1) \sum_{p}(1 + v_{p\bar{p} })$ (see \eqref{frml:valerabsolupse}).
We deduce from Proposition~\eqref{prop:espforms} that
\[
\begin{aligned}
\chi' F^{p\bar{p} } v_{p\bar{p} } = \chi' F^{p\bar{p} } (1+ v_{p\bar{p} }) - \chi'\mathcal{F} =& \chi' \frac{1}{m}\left[S_m(\lambda)\right]^{\frac{1}{m}-1} S_{m-1}(\lambda ; p)(1+ v_{p \bar{p} }) - \chi'\mathcal{F} \\
\ge &\frac{1}{36 \epsilon} \frac{1}{m}\left[S_m(\lambda)\right]^{\frac{1}{m}-1} \frac{ S_m(\lambda )}{n(m-1)!S_1(\lambda)} S_1(v_{p\bar{p}  }+1 )  - \chi'\mathcal{F} \\
\ge &\frac{1}{36 \epsilon}\frac{C_3}{n(m-1)!S_1(\lambda)} S_1(v_{p\bar{p}  }+1 )  - \chi'\mathcal{F} .
\end{aligned}
\]
We can thus choose $\epsilon$ so that
\[
\epsilon \le \frac{C_3 }{ 36 (m-1)! n^2 (n-1) C_2 C_4},
\]
and so that the negative $\sum_p{v_{p\bar{p} }}$ term in \eqref{eqn: negativev_pp} vanishes. Since we have specified $\epsilon$, the term $\chi'\mathcal{F}$ can now be absorbed into $C_0 \mathcal{F}$. Our inequality of $F^{pp }G_{p\bar{p}}$ becomes
\begin{equation} \label{eqn:FppGpp4}
\begin{aligned}
    0 \ge& F^{pp} G_{p\bar{p} } \ge \frac{C_4}{\lambda_1}(-3C_2|v_1|^2)- \frac{F^{pq,rs}\tilde{g}_{p \bar{q}1 } \tilde{g}_{r\bar{s} \bar{1} } }{\lambda_1} \\
+&  \frac{1}{32K}F^{pp} \lambda_p^2 -26 A^2 \mathcal{F}K  + \Psi^{\prime \prime} F^{pp}|u_p|^2 +\Psi^{\prime}F^{pp} u_{p \bar{p}}\\
+&(\chi''-  9(\chi')^2 -2\chi')F^{pp}|v_p|^2 -\chi'|v_p|F^{p p } \\
 - &\frac{1}{2} \sum_k A_1 \frac{|v_k|^2}{\Tr_{\omega}(\omega_{u})}-  \sum_k \frac{1}{2A_1K^2} \left|C_4 C_2 u_{\bar{k}} \right|^2 \Tr_{\omega}(\omega_{u}) -C_0\mathcal{F}.  
\end{aligned}
\end{equation}
On the other hand, we have 
\[
-3C_4C_2\frac{|v_1|^2}{\lambda_1} \ge -\frac{n}{S_1} 3 C_4C_2|v_1 |^2  \ge -\frac{n}{S_1}   3C_4C_2\sum_p|v_p |^2 .
\] 
We also have, by Cauchy--Schwarz inequality, that
\[
\chi'|v_p|F^{p p } \le \chi' \left(\frac{1}{2 } F^{p\bar{p} }|v_p|^2 + \frac{1}{2 } \mathcal{F}  \right).
\]
It follows from Proposition~\eqref{prop:espforms} that 
\[
\frac{S_1(\lambda)}{S_m(\lambda)} \left(\sum_p |v_p|^2 S_{m-1}(\lambda|p) \right) \ge \frac{1}{n(m-1)!} \sum_p |v_p|^2  .
\]
It then implies that
\[
\begin{aligned}
&(\chi''-  9(\chi')^2 -2\chi')F^{pp}|v_p|^2 -\chi'|v_p|F^{p p } \\
\ge &\left(\chi''-\frac{19}{2} \chi'^2 -2\chi' \right)F^{p p } |v_p|^2 -C_0 \chi'\mathcal{F} \\
\ge& \left(\chi''-\frac{19}{2}\chi'^2-2\chi' \right) \frac{1}{m} [S_m(\lambda)]^{\frac{1}{m}-1 }  \frac{1}{n(m-1)!} \frac{S_m(\lambda)}{S_1(\lambda) } \sum_p|v_p|^2 -C_0 \mathcal{F} \\
\ge & \left(\chi''-\frac{19}{2}\chi'^2-2\chi' \right) \frac{1}{n(m-1)!} \frac{C_3}{S_1(\lambda) } \sum_p|v_p|^2 -C_0 \mathcal{F}.
\end{aligned}
\]
Hence, the inequality for $L(G)$ becomes
\begin{equation} \label{eqn:FppGpp4b}
\begin{aligned}
    0 \ge& F^{pp} G_{p\bar{p} } \ge \left( \left(\chi''-  \frac{19}{2}(\chi')^2 -2\chi' \right) \frac{C_3}{n(m-1)!S_1} -\frac{n}{S_1}   3C_4C_2 -\frac{1}{2}  \frac{A_1}{S_1}\right) \sum_p|v_p|^2 \\-&\frac{F^{pq,rs}\tilde{g}_{p \bar{q}1 } \tilde{g}_{r\bar{s} \bar{1} } }{\lambda_1} 
+ \frac{1}{32K}F^{pp} \lambda_p^2 -26A^2 \mathcal{F}K  \\
+& \Psi^{\prime \prime} F^{pp}|u_p|^2 +\Psi^{\prime}F^{pp} u_{p \bar{p}}  
 -  \sum_k \frac{1}{2A_1K^2} \left|C_4 C_2 u_{\bar{k}}\right|^2 \Tr_{\omega}(\omega_{u}) -C_0\mathcal{F}.  
\end{aligned}
\end{equation}
By construction, we have $\chi''= 12(\chi')^2$, it thus suffices to ensure that $2\chi'^2 - 2\chi' \ge \frac{ (m-1)!3 n^2 C_4 C_2}{C_3} + \frac{(m-1)!nA_1}{ 2 C_3} $.
It follows that
\[
\begin{aligned}
\chi''- 10\chi'^2 - 2\chi'-\frac{ m!3 n^2 C_4 C_2}{C_3} -\frac{m! nA_1}{ 2 C_3} \ge 0.
\end{aligned}
\]
Thus, we absorb all terms involving $ |v_p|^2$, while keeping the positive $\frac{1}{2}\chi'^2|v_p|^2$ term for later use in Case 2. We now precise that $\epsilon$ is a uniform small constant so that 
\[
2\left( \frac{1}{36\epsilon} \right)^2 - 2 \frac{1}{12\epsilon} \ge (m-1)!3 n^2 C_4 C_2 + \frac{(m-1)!nA_1}{ 2 C_3} \text{, and } \epsilon \le \frac{C_3 }{ 36 (m-1)! n^2 (n-1) C_2 C_4},
\]
where $A_1$ is a large uniform constant that we will choose in case 2. Hence, the inequality \eqref{eqn:FppGpp4} becomes 
\begin{equation} \label{eqn:FppGpp5}
\begin{aligned}
    0 \ge& F^{pp} G_{p\bar{p} } \ge -\frac{F^{pq,rs}\tilde{g}_{p \bar{q}1 } \tilde{g}_{r\bar{s} \bar{1} } }{\lambda_1} 
+ \frac{1}{32K}F^{pp} \lambda_p^2 -26A^2 \mathcal{F}K +\frac{1}{2}\chi'^2|v_p|^2  \\
+& \Psi^{\prime \prime} F^{pp}|u_p|^2 +\Psi^{\prime}F^{pp} u_{p \bar{p}}  
 -  \sum_k \frac{1}{2A_1K^2} \left|C_4 C_2 u_{\bar{k}}\right|^2 \Tr_{\omega}(\omega_{u}) -C_0\mathcal{F}.  
\end{aligned}    
\end{equation}
We remark that the fifth term above is positive so it can be aborbed. We also have
\[
-\sum_k \frac{1}{2A_1 K^2} \left|C_4 C_2 u_{\bar{k}}\right|^2 \Tr_{\omega}(\omega_{u}) \ge - \frac{1}{2A_1} \left(C_4 C_2 \right)^2  n \lambda_1 \mathcal{F}.
\]
Using the concavity of $F^{pq,rs}$, combining the computations above, we finally obtain
\begin{equation}
   \begin{aligned}
    0 \ge F^{pp} G_{p\bar{p} } \ge \Psi^{\prime}F^{pp} u_{p \bar{p}} + \frac{1}{32K}F^{pp} \lambda_p^2 -C_0 \mathcal{F} -28A^2\mathcal{F}K
 - \frac{1}{2A_1} \left(C_4 C_2 \right)^2  n \lambda_1\mathcal{F}, 
\end{aligned}  
\end{equation}
which is similar to (108) in \cite{Sze18}, up to the coefficients of certain terms. Following the estimates (109) and (110) in \cite{Sze18}, we get the required uniform bound for $K^{-1}\lambda_1$ for this case.

\textbf{Case 2:} $-\lambda_n \le \delta \lambda_1$.
We define
\[
I = \{ p: F^{p p } > \delta^{-1 }F^{1 \bar{1} } \}.
\]
It follows from the identity $H_p = 0$ that
\[
\begin{aligned}
&-\frac{\sum_{p \notin I} F^{p p } |\tilde{g}_{1 \bar{1} p } |^2 }{\lambda_1^2 }\\
\ge& -2 (\Phi' )^2 \sum_{p \notin I } F^{p p } |\sum_k u_{kp }u_{\bar{k} } + u_{p }u_{\bar{p} p } |^2 - 6 (\Psi')^2 \sum_{p \notin I} F^{p p } |u_p |^2 - 6 (\chi' )^2 \sum_{p \notin I } F^{pp} |v_p - \tilde{\varphi}_p |^2- C_0\mathcal{F} \\
\ge& -2 (\Phi' )^2 \sum_{p \notin I } F^{p p } |\sum_k u_{kp }u_{\bar{k} } + u_{p }u_{\bar{p} p } |^2  - 6 (2A)^2 \delta^{-1} F^{11} K- 9 (\chi' )^2 \sum_{p \notin I } F^{pp } |v_p  |^2 - C_0(\mathcal{F}+ \chi'\mathcal{F}).
\end{aligned}
\] 
Commutating the derivatives as (113) in \cite{Sze18}, we obtain that for $p\notin I$:
\[
\begin{aligned}
&\frac{F^{pp }|\tilde{g}_{p\bar{1} 1 } |^2}{\lambda_1^2}\le \frac{F^{pp }|\tilde{g}_{1\bar{1} p } |^2}{\lambda_1^2} + C_0F^{pp }(1 + \lambda_1^{-1} |\tilde{g}_{1\bar{1} p } |) \\
\le& \frac{F^{pp }|\tilde{g}_{1\bar{1} p } |^2}{\lambda_1^2}+ C_0 \mathcal{F} + \frac{C_0}{2 K}F^{pp} (|u_{kp}| + |u_{p \bar{p}}| )K^{1/2} + 2A C_0 F^{pp} |u_p| + \chi' C_0 |v_p - \tilde{\varphi}_p| F^{pp}  \\
\le& \frac{F^{pp }|\tilde{g}_{1\bar{1} p } |^2}{\lambda_1^2}+ C_0 \mathcal{F} + \frac{C_0}{2 K}F^{pp} (|u_{kp}| + |u_{p \bar{p}}| )K^{1/2} + A^2 F^{pp} |u_p|^2 +   F^{pp}2\chi' |v_p|^2 + C_0 \mathcal{F}.
\end{aligned}
\]
Additionally, it is clear that
\[
- 9 (\chi' )^2 \sum_{p \notin I } F^{pp } |v_p  |^2 - \sum_{p\notin I}  F^{pp}2\chi' |v_p|^2 \ge - 9 (\chi' )^2  F^{pp } |v_p  |^2 - F^{pp}2\chi' |v_p|^2 .
\]
It then follows that 
\begin{equation} \label{eqn:FppGpp6}
    \begin{aligned}
    0 \ge& F^{pp} G_{p\bar{p} } \ge \frac{C_4}{\lambda_1}( -3C_2|v_1|^2-C_2 |1+v_{1\bar{1}}| )- \frac{F^{pq,rs}\tilde{g}_{p \bar{q}1 } \tilde{g}_{r\bar{s} \bar{1} } }{\lambda_1}- \sum_{p\in I} \frac{F^{pp }|\tilde{g}_{p\bar{1}1 } |^2}{\lambda_1^2} \\
+&\Phi^{\prime \prime}\sum_{p\in I}F^{pp}\left| u_{k p} u_{\bar{k}}+ u_p u_{\bar{p} p}\right| ^2 + \frac{1}{32K}F^{pp} \lambda_p^2  - 25 A^2 \delta^{-1} F^{1\bar{1} } K \\
+& \Psi^{\prime \prime} F^{pp}|u_p|^2 +\Psi^{\prime}F^{pp} u_{p \bar{p}} +(\chi''-9\chi'^2 -2\chi')F^{pp}|v_p|^2 + \chi'F^{pp}v_{p \bar{p} } \\
 - &\frac{1}{2} \sum_k A_1 \frac{|v_k|^2}{\Tr_{\omega}(\omega_{u})}- \frac{1}{2A_1K^2} \left|C_4 C_2\right|^2 \Tr_{\omega}(\omega_{u}) -C_0\left( \mathcal{F} + \frac{2A}{\lambda_1} F^{p\bar{p} } |u_p | + \frac{\chi'}{\lambda_1}|v_p|F^{p p }\right).  
\end{aligned}
\end{equation}
We choose $\delta $ such that
\begin{equation} \label{frml:choosedelta}
\frac{4 (\Psi')^2 \delta}{1-\delta} \le \frac{1}{2} \Psi''.
\end{equation}
Since $\psi' \ge -2A$, it hence suffices to choose $\delta$ such that $\delta \le \frac{\tau}{32A + \tau}$.
We deduce that
\begin{equation} \label{eqn:Fp=0inI}
\begin{aligned}
\sum_{ p \in I}2 (\Phi ' )^2F^{p\bar{p}}\left| \sum_ku_{k p} u_{\bar{k}}+ u_p u_{\bar{p} p}\right| ^2  = 2 \sum_{ p\in I} F^{p \bar{p }} \left|\frac{u_{1\bar{1}p } }{\lambda_1}  + \Psi' u_p + \chi' (v_p- \tilde{\varphi}_p) + w_p \right|^2  \\
\ge 2 \delta \sum_{p \in I } \frac{ F^{p \bar{p }}|u_{1\bar{1}p }| ^2 }{\lambda_1^2} - \frac{4\delta }{1- \delta } (\Psi')^2 \sum_{p \in I } F^{p p } |u_p |^2 - \frac{8\delta }{1- \delta } ( \chi')^2 \sum_{p \in I } F^{p p }|v_p |^2 - C_0 \mathcal{F},
\end{aligned}
\end{equation}
where we used
\[
|a+b+c+d |^2 \ge \delta|a|^2 - \frac{\delta}{1-\delta}|b+c+d|^2 \ge \delta|a|^2 - \frac{2\delta}{1-\delta}|b|^2- \frac{4\delta}{1-\delta}|c|^2 - \frac{4\delta}{1-\delta}|d|^2.
\]
We remark that the second term of \eqref{eqn:Fp=0inI} can be absorbed by half of the positive $\psi'' F^{pp}|u_p|^2$ term, as (115) and (120) in \cite{Sze18}. We now turn to deal with the terms involving $v_p$ and $v_{p\bar{p}}$. Note that
\[
  - \frac{8\delta }{1- \delta } ( \chi')^2 \sum_{p \in I } F^{p p }|v_p |^2 \ge - \frac{8\delta }{1- \delta } ( \chi')^2 F^{p p }|v_p |^2.
\]
Moreover, our choice of $\delta$ in \eqref{frml:choosedelta} yields that $\delta \le \frac{1}{32}A^{-1}$, and therefore, 
\[
\frac{8\delta }{1- \delta } \le \frac{1}{2}.
\]
Note that with the choice of $\chi$ in case 1, all the terms involving $v_p$ and $v_{p\bar{p}}$ vanish, except for the negative $ - \frac{8\delta }{1- \delta } ( \chi')^2 \sum_{p \in I } F^{p p }|v_p |^2$ term arising in \eqref{eqn:Fp=0inI}, and this extra negative term can be absorbed by the additional positive $\frac{1}{2}\chi'^2$ term in the choose of $\chi$. We therefore get that 
\begin{equation} \label{eqn:FppGpp7}
    \begin{aligned}
    0 \ge& F^{pp} G_{p\bar{p} } \ge- \frac{F^{pq,rs}\tilde{g}_{p \bar{q}1 } \tilde{g}_{r\bar{s} \bar{1} } }{\lambda_1}-(1-2\delta) \sum_{p\in I} \frac{F^{pp }|\tilde{g}_{p\bar{1}1 } |^2}{\lambda_1^2} \\
+&  \frac{1}{32K}F^{pp} \lambda_p^2  - 25 A^2 \delta^{-1} F^{1\bar{1} } K +\Psi^{\prime}F^{pp} u_{p \bar{p}} + \left(\frac{1}{2} \Psi' -\delta^2 A^2 - C_0A^2 \lambda_1^{-1} \right) F^{pp} |u_p|^2  \\
 - & \frac{1}{2A_1K} \left(C_4 C_2 \right)^2 \Tr_{\omega}(\omega_{u}) -C_0 \mathcal{F} .  
\end{aligned}
\end{equation}
To finish, we observe that
$$
\begin{aligned}
\sum_{p=1}^n F^{p p} \lambda_p^2&=\frac{1}{m}\left[S_m(\lambda)\right]^{\frac{1}{m} -1} \sum_{p=1}^n S_{m-1}(\lambda ;p) \lambda_p^2\\
& = \frac{1}{m}\left[S_m(\lambda)\right]^{\frac{1}{m}-1} \sum_p \lambda_p \left(S_m - S_m(\lambda ;p)\right)\\
& = \frac{1}{m}\left[S_m(\lambda)\right]^{\frac{1}{m}} S_1 -  \frac{1}{m}\left[S_m(\lambda)\right]^{\frac{1}{m}-1}(m+1) S_{m+1},
\end{aligned}
$$
where the second equality follows from the splitting formula \eqref{for:split}. We can use half of the $F^{pp} \lambda_p^2$ terms to cancel the negative $\Tr_{\omega}(\omega_{u})$ terms. More precisely, we now choose 
\[
A_1 = 32 C_4^2 C_2^2/C_3.
\]
Then we obtain 
\begin{equation} \label{eqn:FppGpp8}
    \begin{aligned}
    0 \ge& 
  \frac{1}{64K}F^{pp} \lambda_p^2  - 25 A^2 \delta^{-1} F^{1\bar{1} } K -2A(S_m^{1/m } -\mathcal{F})  + (\frac{1}{2} \Psi'' -\delta^2 A^2 - C_0A^2 \lambda_1^{-1} ) F^{pp} |u_p|^2  \\
 - & C_0 S_{m+1 }-C_0 \mathcal{F} .  
\end{aligned}
\end{equation}
We choose $A$ such that $2A \ge C_0$, and the last negative term can be absorbed. Moreover we observe that 
\[
F^{pp} \lambda_p^2 \ge F^{11} \lambda_1^2.
\]
Therefore, we assume that 
\[
\lambda_1^2 \ge 25 \cdot 128 A^2 \delta^{ -1} K^2,
\]
and we can thus use half of the first term to absorb the second term. It follows that 
\[
0 \ge \frac{1}{128K} F^{pp } \lambda_p^2 - C_0 - C_0 S_{m+1} + \left(\frac{1}{2} \Psi'' -\delta^2 A^2 - C_0A^2 \lambda_1^{-1} \right) F^{pp }|u_p |^2.
\]
To deal with the last term, we recall that $\delta$ is chosen to be small such that $\delta \le \frac{1}{16A}$, and we assume moreover that $\lambda_1 \ge 4 \tau^{-1 } A C_0$. Then the third term can be absorbed. We finally arrive at 
\[
F^{pp }  \lambda_p^2 -C_0 S_{m+1} K\le 128 C_0' K.
\]
It follows that
\[
\left[S_m(\lambda)\right]^{\frac{1}{m}} S_1 -  \left[S_m(\lambda)\right]^{\frac{1}{m}-1}(m+1) S_{m+1} - C_0 S_{m+1}K \le 128 C_0' K.
\]
If $S_{m+1 } \le0$ then we get the upper bound for $S_1$. Otherwise, using Maclaurin's inequality (Proposition~\ref{prop:espstandard}), we deduce that 
\[
\begin{aligned}
128 C_0' K \ge &\left[S_m(\lambda)\right]^{\frac{1}{m}} S_1 - C_0 S_{m+1} K \\
\ge & \left[S_m(\lambda)\right]^{\frac{1}{m}} S_1 - C_0 S_m^{\frac{m+1}{m} } K\\
\ge &
\left[S_m(\lambda)\right]^{\frac{1}{m}} S_1 -C_0''K,
\end{aligned}
\]
and we thus finish the proof.
\end{proof}

\section{Regularity of solutions} \label{sec:mainthm}
The $\mathcal{C}^2$-estimate that we obtain in the last session allows us to obtain higher order estimates. As an application of the $\mathcal{C}^2$-estimate, we prove a regularity result in this session.
\begin{theorem}
   Let $(X,\omega)$ be a compact Hemitian manifold. Let $\varphi$ be a bounded $(\omega,m )$-subharmonic function solving the complex Hessian equation
   \[
   (\omega + \ddc \varphi )^m \wedge \omega^{n-m} = e^{f(\cdot, \varphi) } \omega^n,
   \]
   where $f: X \times \R \rightarrow \R$ is a smooth function. Then $\varphi$ is smooth.
\end{theorem}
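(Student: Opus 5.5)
Following the outline in the introduction, I will realize $\varphi$ as a limit of smooth solutions to auxiliary equations with an $e^{u}$ term, and use Corollary~\ref{coro:dp2} to identify the limit. First, since $\varphi$ is bounded, say $|\varphi|\le A_0/2$, it is continuous; this is the regularity input that makes the construction below uniform in $j$. Choose smooth $v_j\in \SH_m(X,\omega)$ with $v_j\to\varphi$ uniformly, so $|v_j|\le A_0$, for instance by the approximation theorem of Ko{\l}odziej--Nguyen for continuous $(\omega,m)$-subharmonic functions. Fix once and for all a smooth $\tilde{\varphi}$ with $|\tilde{\varphi}-\varphi|<\epsilon_0$. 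Then $|\tilde{\varphi}-v_j|<2\epsilon_0$ for $j$ large, where $\epsilon_0$ is the constant required in Theorem~\ref{thm:c2estmate}. Set $h(x,t)=f(x,t)-t$ and consider
\[
(\omega+\ddc u_j)^m\wedge\omega^{n-m}=e^{u_j+h(x,v_j)}\omega^n .
\]
This equation has a right-hand side strictly increasing in $u_j$, so it admits a unique smooth admissible solution $u_j$ by the Hermitian Hessian theory of Sz\'ekelyhidi/Zhang (continuity method, with openness from the monotonicity in $u$).

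Next I will derive estimates uniform in $j$. The $C^0$ bound on $u_j$ follows from the maximum principle, since $h(x,v_j)$ is uniformly bounded. Theorem~\ref{thm:c2estmate} then gives $\sup\Delta u_j\le C(1+\sup|\nabla u_j|^2)$, with $C$ depending only on $A_0$, $\tilde{\varphi}$, $\|h\|_{C^2(X\times[-A_0,A_0])}$, and not on derivatives of $v_j$. To close the loop I will apply the standard blow-up argument of Dinew--Ko{\l}odziej / Sz\'ekelyhidi: if $\sup|\nabla u_j|\to\infty$, rescaling produces a bounded nonconstant $m$-subharmonic solution of $\sigma_m=0$ on $\C^n$ with bounded gradient. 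The Liouville theorem rules this out, which yields uniform $C^1$ and hence uniform $\Delta$ bounds. Since $\omega_{u_j}\in\Gamma_m$ and the trace is bounded, \eqref{frml:valerabsolupse} bounds all eigenvalues, so the equation is uniformly elliptic. However, $v_j$ is only uniformly $C^0$-controlled, so Evans--Krylov does not directly apply. I will instead conclude only that $u_j$ is bounded in $C^{1,\alpha}$ (indeed $W^{2,p}$), and extract a subsequence $u_j\to u$ in $C^{1,\alpha}$ with $u\in\SH_m(X,\omega)$ bounded.

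Passing to the limit, Hessian measures are continuous under uniform convergence of bounded $(\omega,m)$-subharmonic functions, and $h(x,v_j)\to h(x,\varphi)$ uniformly. Hence $\omega_u^m\wedge\omega^{n-m}=e^{u-\varphi+f(x,\varphi)}\omega^n$, while $\omega_\varphi^m\wedge\omega^{n-m}=e^{\varphi-\varphi+f(x,\varphi)}\omega^n$. Thus $e^{-u}\omega_u^m\wedge\omega^{n-m}=e^{-\varphi}\omega_\varphi^m\wedge\omega^{n-m}$, and Corollary~\ref{coro:dp2} gives $u=\varphi$. Consequently $\varphi\in C^{1,\alpha}$ with $\Delta\varphi\in L^\infty$.

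Finally I will bootstrap. The right-hand side $e^{f(x,\varphi)}$ is now $C^{1,\alpha}$, in particular H\"older, and the equation is uniformly elliptic and concave in $\ddc\varphi$ (via $S_m^{1/m}$). The complex Evans--Krylov theorem for $C^{1,1}$ solutions with H\"older data (Tosatti--Wang--Weinkove--Yang) then gives $\varphi\in C^{2,\beta}$. Schauder estimates on the linearized equation, with data $f(x,\varphi)\in C^{2,\beta}$, iterate to $C^{k,\beta}$ for all $k$, so $\varphi$ is smooth. The main obstacle is the uniform gradient bound for $u_j$. The blow-up argument must absorb the term $e^{h(x,v_j)}$, which has no derivative control. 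I expect this to be harmless, because after rescaling by $\sup|\nabla u_j|$ the right-hand side tends to zero in sup norm, but this is the step I would check most carefully.
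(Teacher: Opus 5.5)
Your first three steps are essentially the paper's argument: the same auxiliary equation with $h=f-t$, Theorem~\ref{thm:c2estmate} combined with the Dinew--Ko{\l}odziej Liouville theorem, and identification of the limit via Corollary~\ref{coro:dp2}. The gap is in the final bootstrap, where you apply Tosatti--Wang--Weinkove--Yang directly to $\varphi$.

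At that point you only know $\varphi\in C^{1,\alpha}\cap W^{2,p}$ with $\Delta\varphi\in L^\infty$ in the weak sense. That is not $C^{1,1}$: a bound on the complex Laplacian does not control the full real Hessian. It is certainly not $C^2$. TWWY is an a priori estimate for classical solutions with bounded $\ddc u$, and it does not by itself upgrade a pluripotential solution. To use it on $\varphi$ you would need one of two things:
\begin{enumerate}
\item[(i)] A viscosity or strong-solution version of Caffarelli's theory. This requires checking that $\varphi$ solves, a.e.\ or in the viscosity sense, a globally uniformly elliptic concave extension of $S_m^{1/m}$. Your proposal does not do this.
\item[(ii)] Smooth approximants whose right-hand sides are uniformly H\"older. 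Your $u_j$ do not qualify, as you yourself noted, because $h(x,v_j)$ is only equicontinuous.
\end{enumerate}

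The paper closes exactly this gap with a second approximation. Once $|\Delta u_j|\le C$ uniformly, the $u_j$ are smooth, $(\omega,m)$-subharmonic and uniformly $C^{1,\alpha}$. One then solves
\[
(\omega+\ddc\tilde u_j)^m\wedge\omega^{n-m}=e^{\tilde u_j-u_j+f(\cdot,u_j)}\omega^n .
\]
Theorem~\ref{thm:c2estmate} with $v=u_j$, plus the Liouville theorem, gives uniform Laplacian bounds on $\tilde u_j$. The right-hand side is therefore uniformly $C^{0,\alpha_0}$, and TWWY now legitimately gives uniform $C^{2,\alpha}$ bounds on the smooth $\tilde u_j$. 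The limit $\tilde u$ satisfies $e^{-\tilde u}\omega_{\tilde u}^m\wedge\omega^{n-m}=e^{-u}\omega_u^m\wedge\omega^{n-m}$, so $\tilde u=u=\varphi$ by Corollary~\ref{coro:dp2}. Schauder then gives smoothness. Inserting this step, or a fully justified viscosity-solution argument as in (i), repairs your proof; everything else you wrote is sound.
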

\begin{proof}
     We take a sequence of decreasing, smooth $(\omega,m)$-subharmonic functions $v_j$ converging to $\varphi$ as $j \rightarrow + \infty$. We consider the complex Hessian equation 
     \begin{equation} \label{eqn:mainthm1}
     (\omega + \ddc u_j)^m \wedge \omega^{n-m} = e^{u_j + h(x, v_j) } \omega^n,
     \end{equation}
     where $h(x,t) = f(x,t)-t$ is a smooth function. The existence of smooth solution follows essentially from \cite{Sze18}, with appropriate modifications presented in \cite[Theorem 3.17]{KN15weaksol}.

     Since $\tilde{\varphi}$ is chosen such that $|\tilde{\varphi} - v_j | <\epsilon$ and $v_j \rightarrow \varphi$, where $\varphi$ is continuous, it follows that $|v_j - \tilde{\varphi}| <2\epsilon$, where $\epsilon$ is independent of $j$.
     
     We can deduce from Theorem~\ref{thm:c2estmate} and Dinew--Ko{\l}odziej's Liouville type theorem \cite{LiouvilleDK17} that $ \sup_{X}|\Delta u_j|$ is uniformly bounded. By extracting a subsequence, we can suppose that $u_j$ converges uniformly to $u$. It follows from the continuity of the Hessian operator (see \cite{kolodziej2023complexhessianmeasuresrespect}) that $u$ satisfies
     \[
     (\omega + \ddc u )^m \wedge \omega^{n-m} = e^{u+ h(x,\varphi)} \omega^n .
     \]
     We remark that $\varphi$ is a $(\omega,m)$-sh function satisfying
     \[
     ( \omega + \ddc \varphi)^m \wedge \omega^{n-m} = e^{f (x, \varphi) } \omega^n .
     \]
     Hence it follows from the domination principle that $\varphi =u$. We deduce that \[
      |\Delta u| \le C ,
     \]
     for some constant $C$ independent of $j$.

We would like to apply \cite{TWWY_2014} but at this stage we only have a uniform bound for the $L^{\infty}$-norm of $v_j$. 

 To obtain the desired regularity for $\varphi$, we next consider the following equation, where we replace $v_j$ by $u_j$ itself:
     \begin{equation} \label{eqn:mainthmHessian2}
     (\omega +\ddc \tilde{u}_j )^m \wedge \omega^{n-m } = e^{ \tilde{u}_j - u_j + f(\cdot,u_j )}\omega^n.
     \end{equation}
     We remark that $ f(\cdot,t )$ is smooth and $u_j$ is a smooth $(\omega,m)$-subharmonic function. Hence it follows from Theorem~\ref{thm:c2estmate} that 
     \[
     |\Delta \tilde{u}_j |\le C(1+ \sup_X|\nabla \tilde{u}_j |^2 ).
     \]
     Additionally, due to Dinew--Ko{\l}odziej's Liouville type theorem \cite{LiouvilleDK17}, the gradient of $\varphi$ is bounded. Extract a subsequence and suppose that $\tilde{u}_j$ converges uniformly to $\tilde{u}$. We deduce from the continuity of the Hessian operator \cite{kolodziej2023complexhessianmeasuresrespect} that 
     \[
     (\omega + \ddc \tilde{u} )^m \wedge \omega^{n-m } = e^{\tilde{u} -u + f(x, u)}\omega^n.
     \]
     The domination principle yields that $\tilde{u}=u$ (see Corollary~\ref{coro:dp2}). Moreover, since $|\Delta u_j|$ is bounded, we deduce that $e^{ \tilde{u}_j - u_j + f(\cdot,u_j )}$ is bounded in $\mathcal{C}^{0,\alpha_0} $, for some $0<\alpha_0 <1$. It thus follows from \cite{TWWY_2014} that solutions to equation \eqref{eqn:mainthmHessian2} satisfy $\|\tilde{u}_j \|_{\mathcal{C}^{2, \alpha } } \le C_1$. We conclude that $\tilde{u} \in \mathcal{C}^{2,\alpha'}$ for all $\alpha' < \alpha$. By Schauder's estimate (see \cite[Theorem 2.10]{Szekelyhidi2014book}), we obtain that $\tilde{u}$ is smooth, and so is $\varphi$. 
     \end{proof}

\bibliographystyle{alpha}
\bibliography{ref}
\end{document}